\documentclass[10pt,a4paper]{article}

\usepackage{a4wide}
\usepackage{amsthm}
\usepackage{amsfonts}
\usepackage{amsmath}
\usepackage[english]{babel}

\usepackage{faktor}
\usepackage{etex}
\usepackage{hyperref}
\usepackage{multimedia}
\usepackage[all]{xy}
\usepackage{amssymb}
\usepackage{graphicx,textpos}
\usepackage[dvipsnames]{xcolor}
\usepackage{helvet}
\usepackage{stmaryrd}
\usepackage{enumerate}
\usepackage{todonotes}
\usepackage{pdfsync}
\usepackage{dsfont}
\usepackage[shortcuts]{extdash}
\usepackage[all]{xy}
  \newdir{ >}{{}*!/-9pt/@{>}}

\newcommand{\N}{\mathbb{N}}
\newcommand{\R}{\mathbb{R}}
\newcommand{\Q}{\mathbb{Q}}
\newcommand{\Z}{\mathbb{Z}}

\newcommand{\I}{\mathds{1}}

\newcommand{\rst}[1]{\ensuremath{{\mathbin\mid}\raise-.5ex\hbox{$#1$}}}

\newcommand{\lien}{\mathfrak{n}}

\DeclareMathOperator{\GL}{GL}

\DeclareMathOperator{\Aut}{Aut}

\author{Jonas Der\'e\thanks{Email: \href{mailto:jonas.dere@kuleuven.be}{jonas.dere@kuleuven.be} The author was supported by a postdoctoral fellowship of the Research Foundation -- Flanders (FWO).}}
\title{\textbf{Strongly scale-invariant \\ virtually polycyclic groups}}
\date{
}

\newtheorem{Def}{Definition}[section]
\newtheorem{Ex}[Def]{Example}
\newtheorem{Cor}[Def]{Corollary}
\newtheorem{Thm}[Def]{Theorem}
\newtheorem{Prop}[Def]{Proposition}
\newtheorem{Lem}[Def]{Lemma}

\newtheorem*{Prop*}{Proposition}
\newtheorem*{Lem*}{Lemma}

\newtheorem{Con}{Conjecture}
\hyphenation{ho-lo-no-my}

\makeatletter
\newtheorem*{rep@theorem}{\rep@title}
\newcommand{\newreptheorem}[2]{%
	\newenvironment{rep#1}[1]{%
		\def\rep@title{#2 \ref{##1}}%
		\begin{rep@theorem}}%
		{\end{rep@theorem}}}
\makeatother

\newreptheorem{Thm}{Theorem}
\newreptheorem{Cor}{Corollary}

\newcommand{\suchthat}{\;\ifnum\currentgrouptype=16 \middle\fi|\;}

\newcommand\restr[2]{{
		\left.\kern-\nulldelimiterspace 
		#1 
		\vphantom{\big|} 
		\right|_{#2} 
}}

\begin{document}

\maketitle

\begin{abstract}
	A finitely generated group $\Gamma$ is called strongly scale-invariant if there exists an injective endomorphism $\varphi: \Gamma \to \Gamma$ with the image $\varphi(\Gamma)$ of finite index in $\Gamma$ and the subgroup $\displaystyle \bigcap_{n>0}\varphi^n(\Gamma)$ finite. The only known examples of such groups are virtually nilpotent, or equivalently, all examples have polynomial growth. A question by Nekrashevych and Pete asks whether these groups are the only possibilities for such endomorphisms, motivated by the positive answer due to Gromov in the special case of expanding group morphisms. 
	
	In this paper, we study this question for the class of virtually polycyclic groups, i.e.~the virtually solvable groups for which every subgroup is finitely generated. Using the $\Q$-algebraic hull, which allows us to extend the injective endomorphisms of certain virtually polycyclic groups to a linear algebraic group, we show that the existence of such an endomorphism implies that the group is virtually nilpotent. Moreover, we fully characterize which virtually nilpotent groups have a morphism satisfying the condition above, related to the existence of a positive grading on the corresponding radicable nilpotent group. As another application of the methods, we generalize a result of Fel'shtyn and Lee about which maps on infra-solvmanifolds can have finite Reidemeister number for all iterates.
\end{abstract}

\section{Introduction}

Expanding maps were introduced by M.~Shub in \cite{shub69-1} as one of the first examples in dynamical systems combining structural stability with chaos. The natural question which closed manifolds admit an expanding map was only answered after more than ten years by M.~Gromov in \cite{grom81-1}, by showing that such a manifold must be an infra-nilmanifold, i.e.~a compact quotient of a simply connected nilpotent Lie group by a discrete group of isometries. Although many infra-nilmanifolds admit expanding maps, for example all infra-nilmanifolds modeled on a nilpotent Lie group of nilpotency class $\leq 2$, there are some examples which do not have expanding maps, for instance the ones modeled on a characteristically nilpotent Lie algebra as in  \cite{dl57-1}. Only much later, a full characterization of the infra-nilmanifolds admitting an expanding map was given in \cite{dd14-1}, by showing that the existence of an expanding map depends only on the covering nilpotent Lie group and is equivalent to the existence of a positive grading on the corresponding Lie algebra. 

On the level of the fundamental group, every expanding map induces an injective endomorphism $\varphi:\Gamma \to \Gamma$ with image $\varphi(\Gamma)$ of finite index in $\Gamma$ and such that the subgroup $\displaystyle \bigcap_{n>0}\varphi^n(\Gamma)$ is finite. In short, we will call an endomorphism $\varphi: \Gamma \to \Gamma$ satisfying the latter conditions strongly scale-invariant. Motivated by the result in \cite{grom81-1}, it is conjectured that the only finitely generated groups admitting a strongly scale-invariant endomorphism are the virtually nilpotent groups, see \cite{np11-1}. This conjecture is hence the equivalent of the theorem by M.~Gromov for morphisms of finitely generated groups.

\begin{Con}
	\label{con:main}
	Every finitely generated strongly scale-invariant group is virtually nilpotent.
\end{Con}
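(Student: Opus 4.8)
The overall plan is to reduce the conjecture to Gromov's polynomial growth theorem: since a finitely generated group is virtually nilpotent if and only if it has polynomial growth, it suffices to show that a strongly scale-invariant group $\Gamma$ has polynomial growth. First I would dispose of the degenerate case: if $[\Gamma:\varphi(\Gamma)]=1$ then $\varphi$ is an automorphism, so $\bigcap_{n>0}\varphi^n(\Gamma)=\Gamma$ is finite and the statement is trivial. Hence we may assume the index $m=[\Gamma:\varphi(\Gamma)]$ satisfies $m>1$, so that $\varphi$ is a genuine self-embedding. Note that growth type is a commensurability invariant and $\varphi(\Gamma)\cong\Gamma$ has finite index, so this isomorphism alone carries no information; the entire argument must exploit the \emph{dynamics} of the single endomorphism $\varphi$. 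This is precisely the feature distinguishing Conjecture~\ref{con:main} from Benjamini's weaker scale-invariance conjecture, which is false: the known counterexamples (lamplighter and affine groups) arise from nested families of isomorphic finite-index subgroups that cannot be realized by iterating one endomorphism.

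The mechanism I would use to extract this dynamical information is the coset tree of the chain $\Gamma\supset\varphi(\Gamma)\supset\varphi^2(\Gamma)\supset\cdots$. Left multiplication makes $\Gamma$ act on this spherically homogeneous rooted tree $T$ of constant branching $m$, the endomorphism $\varphi$ acts as the renormalization (shift) of $T$, and the hypothesis that $\bigcap_{n>0}\varphi^n(\Gamma)$ is finite says exactly that the action is faithful modulo a finite kernel and contracting. Where $\Gamma$ admits a completion carrying $\varphi$ -- a $\Q$-algebraic hull in the virtually polycyclic case -- this picture linearizes: $\varphi$ extends to an algebraic endomorphism of the hull, and the finite-core condition forces the eigenvalues governing the action to avoid the unit circle in a uniformly contracting way. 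This in turn produces a positive grading on the radicable nilpotent part, which forces the hull, and hence $\Gamma$, to be virtually nilpotent. This is exactly the route realized in the earlier sections for virtually polycyclic groups, and the plan for the general conjecture is to push the same contraction-to-grading dichotomy through a larger class of groups.

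The main obstacle, and the reason the full conjecture remains open beyond the polycyclic setting, is that a general finitely generated group possesses no algebraic hull to linearize $\varphi$, so the spectral argument of the previous paragraph has no direct analogue. Concretely, one must rule out both exponential and intermediate growth for groups that are not virtually linear. Excluding exponential growth would follow from a Tits-alternative-type dichotomy together with the Milnor--Wolf theorem once $\Gamma$ is known to be amenable or free of nonabelian free subgroups, reducing the elementary-amenable case to the linear machinery already in hand. The genuinely hard step is excluding \emph{intermediate} growth: the coset-tree construction lands $\Gamma$ among contracting self-similar groups, a class that famously contains Grigorchuk-type groups of intermediate growth. The crux is therefore to show that a \emph{globally defined} endomorphism $\varphi$ -- rather than a virtual endomorphism assembled from incompatible level isomorphisms -- imposes enough rigidity to preclude such behaviour. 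I expect this to require a new growth or entropy estimate adapted to endomorphism-generated coset trees, or a structural reduction of the self-similar action to the (virtually) linear case handled here; isolating the correct rigidity statement is the principal open difficulty.
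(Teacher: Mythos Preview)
The statement you are addressing is labeled \emph{Conjecture} in the paper, and the paper does not prove it; it explicitly states that Conjecture~\ref{con:main} remains open and only establishes the special case of virtually polycyclic groups (Theorem~\ref{thm:main}). So there is no ``paper's own proof'' to compare your proposal against, and any purported proof of the full statement would have to be a genuinely new result.

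Your submission is not a proof either: it is a research outline that openly concedes the problem is unresolved (``isolating the correct rigidity statement is the principal open difficulty''). As such it cannot be assessed as a proof attempt. Two of the reductions you sketch are moreover not known to go through. First, showing that $\Gamma$ contains no nonabelian free subgroup, or is amenable, is itself wide open for strongly scale-invariant groups; you invoke this step but give no mechanism. Second, the coset-tree/self-similar framework you describe does not by itself exclude intermediate growth, as you yourself note; nothing in the proposal closes that gap.

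Finally, your summary of the virtually polycyclic case does not match the paper's actual argument. The paper does \emph{not} show that the eigenvalues of the extended endomorphism avoid the unit circle, nor does it produce a positive grading as part of proving Theorem~\ref{thm:main}. What it uses is (i) rigidity of algebraic tori to force some iterate of the extension $\Phi$ to act trivially on $\Gamma^{\Q}/U(\Gamma^{\Q})$, and then (ii) Proposition~\ref{prop:eigenvalue1} (absence of the eigenvalue $1$ on the Fitting subgroup, not a full spectral gap) together with a fixed-point trick to conclude $\Gamma=N$. The positive grading appears only later, in Section~\ref{sec:virtuallynilpotent}, as a \emph{characterization} of which virtually nilpotent groups are strongly scale-invariant, not as an ingredient in the polycyclic reduction.
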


\noindent In the paper \cite{np11-1}, the authors first disprove a weaker conjecture by I.~Benjamini, where one only assumes a nested sequence of finite index subgroups $\Gamma_n <\Gamma$ with $\displaystyle \bigcap_{n>0} \Gamma_n$ finite and the groups $\Gamma_n$ isomorphic to the original group $\Gamma$. The counterexamples, constructed via a semi-direct product of groups with certain properties, do not have a single endomorphism $\varphi:\Gamma \to \Gamma$ realizing the nested sequence as $\varphi^n(\Gamma)= \Gamma_n$ and hence Conjecture \ref{con:main} remains open. 

So far, Conjecture \ref{con:main} has only be studied for some special cases. Firstly, if the images $\varphi^n(\Gamma)$ are normal subgroups of $\Gamma$, then \cite[Theorem 1.1.]{vanl18-1} shows that $\Gamma$ is virtually abelian. On the other hand, with additional assumptions on a profinite group associated to $\varphi$, \cite[Corollary 1.5.]{hlv20-1} implies that $\Gamma$ must be virtually nilpotent. In this paper, we prove Conjecture \ref{con:main} in the special case of virtually polycyclic groups. Note that, as a consequence of the existence of the Hirsch length on these groups, every injective group morphism automatically has an image of finite index. 
\begin{repThm}{thm:main}
	Let $\Gamma$ be a virtually polycyclic group which is strongly scale-invariant, then $\Gamma$ is virtually nilpotent.
\end{repThm}
\noindent For the proof, we first show how to extend an injective endomorphism $\varphi$ to the $\Q$-algebraic hull of a injectively characteristic finite index subgroup. By using the structure of solvable linear algebraic groups, we deduce that some iterate of $\varphi$ must leave an infinite subgroup invariant if the group $\Gamma$ is not virtually nilpotent.

To every endomorphism $\varphi: \Gamma \to \Gamma$ of any group $\Gamma$, we can associate an invariant $R(\varphi) \in \N \cup \{\infty\}$, called the Reidemeister number. The exact definition will be given in Section \ref{sec:reidemeister}. By considering the iterates $\varphi^n: \Gamma \to \Gamma$, we can make a zeta function from the Reidemeister numbers $R(\varphi^n)$, as long as they are all different from $\infty$. As a consequence of the proof, we show that the only virtually polycyclic groups which have an injective endomorphism $\varphi$ with a well-defined Reidemeister zeta function are the virtually nilpotent ones. 
\begin{repThm}{cor:felshtyn}
		Let $\varphi: \Gamma \to \Gamma$ be a monomorphism of a virtually polycyclic group. If $R(\varphi^n) < \infty$ for all integers $n > 0$ the group $\Gamma$ must be virtually nilpotent. In particular the Reidemeister zeta function $R_\varphi(z)$ of $\varphi$ is rational if the group $\Gamma$ is additionally torsion-free.
\end{repThm}
\noindent This generalizes the result in \cite{fl13-2} which only deals with automorphisms on a restrictive class of virtually polycyclic groups, namely lattices in a Lie group of type $(R)$. It is also related to the conjecture in \cite{fh94-1} that every finitely generated group admitting an injective endomorphism with finite Reidemeister number must have subexponential growth.

Conjecture \ref{con:main} and Theorem \ref{thm:main} raise the problem of characterizing the virtually nilpotent groups which are strongly scale-invariant. To every finitely generated virtually nilpotent group $\Gamma$, we can associate a unique radicable group $N^\Q$. By generalizing the methods of \cite{dd14-1}, which only deal with fundamental groups of infra-nilmanifolds and thus only torsion-free groups, we give the following theorem, showing that it suffices to know $N^\Q$ to decide whether or not the group $\Gamma$ is strongly scale-invariant.

\begin{repThm}{thm:virtuallynilpotent}
Let $\Gamma$ be a finitely generated virtually nilpotent group with associated radicable nilpotent group $N^\Q$. The group $\Gamma$ is strongly scale-invariant if and only if the Lie algebra corresponding to $N^\Q$ has a positive grading. Moreover, if $\Gamma$ is strongly scale-invariant, there exists a monomorphism $\varphi: \Gamma \to \Gamma$ such that $\displaystyle \bigcap_{n>0} \varphi^n(\Gamma)$ is the maximal finite normal subgroup of $\Gamma$.
\end{repThm}
\noindent For torsion-free nilpotent groups, this was studied in \cite{corn14-1}, whereas the more general case of torsion-free virtually nilpotent groups was given in \cite{dd14-1}. Theorem \ref{thm:virtuallynilpotent} in particular shows that being strongly scale-invariant for virtually nilpotent groups is preserved under taking finite index subgroups. It is conjectured that two finitely generated torsion-free nilpotent groups $N_1$ and $N_2$ are quasi-isometric if and only if the associated real Lie groups $N_1^\R$ and $N_2^\R$, which contain $N_1$ and $N_2$ as cocompact lattices, are isomorphic. Hence this result motivates the conjecture that being strongly scale-invariant is a quasi-isometric invariant for virtually nilpotent groups, or even for general finitely generated groups if Conjecture \ref{con:main} is true.

We start by giving some preliminaries about virtually polycyclic group, their $\Q$-algebraic hulls and the Mal'cev completion of nilpotent groups in Section \ref{sec:prel}. Afterwards, Section \ref{sec:generalprop} states general properties of strongly scale-invariant endomorphisms, allowing us to simply the situation for virtually polycyclic groups. Next, we prove Theorem \ref{thm:main} in Section \ref{sec:maintheorem} and conclude in Section \ref{sec:reidemeister} with the discussion of finite Reidemeister numbers. Finally we characterize the virtually nilpotent groups which are strongly scale-invariant in Section \ref{sec:virtuallynilpotent}.

\section{Preliminaries}
\label{sec:prel}
In this section we introduce terminology for the rest of the paper and recall some properties for both nilpotent and virtually polycyclic groups. For more background and further details we refer to \cite{sega83-1}. All groups we consider are assumed to be finitely generated. 
\paragraph*{Notation} 
The following definition allows us to simplify some statements in the paper.
\begin{Def}
	\label{def:ssi}
	Let $\Gamma$ be any group with endomorphism $\varphi:\Gamma \to \Gamma$.
	\begin{itemize}
		\item We call $\varphi: \Gamma \to \Gamma$ a \textbf{monomorphism} of $\Gamma$ if it is injective.
		\item A monomorphism $\varphi: \Gamma \to \Gamma$ is called \textbf{strongly scale-invariant} if $\varphi(\Gamma)$ has finite index in $\Gamma$ and $\displaystyle \bigcap_{n > 0} \varphi^n(\Gamma)$ is finite.
		\item The group $\Gamma$ is called \textbf{strongly scale-invariant} if it admits a monomorphism $\varphi: \Gamma \to \Gamma$ which is strongly scale-invariant.	
	\end{itemize}
\end{Def}
\noindent Let $\Gamma$ be a group with a monomorphism $\varphi: \Gamma \to \Gamma$, then we say that a subgroup $\Gamma^\prime < \Gamma$ is $\varphi$-invariant if $\varphi(\Gamma^\prime) < \Gamma^\prime$. If $\Gamma^\prime$ is $\varphi$-invariant for all monomorphisms $\varphi: \Gamma \to \Gamma$ then we call $\Gamma^\prime$ an \emph{injectively characteristic subgroup} of $\Gamma$. Note that most injectively characteristic subgroups we consider will in fact be fully characteristic, meaning it is invariant under all endomorphisms of the group $\Gamma$.
Every injectively characteristic subgroup is a normal subgroup, since it is invariant under all conjugation maps.

\begin{Ex}
\noindent For any group $\Gamma$ and every integer $k > 0$, we denote by $\Gamma^k$ the subgroup generated by all the $k$-th powers $\gamma^k$ of elements $\gamma \in \Gamma$. The subgroups $\Gamma^k <\Gamma$ are injectively characteristic, and for every subgroup $\Gamma^\prime$ of finite index in $\Gamma$, there exists an integer $k > 0$ such that $\Gamma^k < \Gamma^\prime$.
\end{Ex}

\paragraph{Rational Mal'cev completion}

For every group $N$, we can define the lower central series $\gamma_i(N)$ inductively via $\gamma_1(N) = N$ and $\gamma_{i+1}(N) = [N,\gamma_i(N)]$. A group $N$ is called $c$-step nilpotent if $\gamma_{c+1}(N) = \{e\}$ and $\gamma_c(N) \neq \{e\}$. The isolator of a subgroup $H < N$ in $N$ is defined as $$\sqrt[N]{H} = \{x \in N \suchthat \exists m > 0 : x^m \in H \}.$$ The isolators $\sqrt[N]{\gamma_i(N)}$ of the lower central series of a finitely generated torsion-free nilpotent group $N$ are injectively characteristic subgroups such that the quotients $\faktor{\sqrt[N]{\gamma_i(N)}}{ \sqrt[N]{\gamma_{i+1}(N)}}$ are torsion-free and hence isomorphic to $\Z^{m_i}$ for some $m_i \geq 0$.

Let $N$ be a finitely generated torsion-free nilpotent group, then $N$ embeds as a subgroup of a radicable torsion-free nilpotent group $N^\Q$ such that every element in $N^\Q$ has a power which lies in $N$, and we call $N^\Q$ the \emph{rational Mal'cev completion} of N. Every monomorphism $\varphi: N \to N$ uniquely extends to an automorphism of $N^\Q$. For every integer $ m > 0 $, every element $x \in N^\Q$ has a unique $m$-th rooth, i.e.~a unique element $y \in N^\Q$ such that $y^m = x$ and we denote this unique element $y$ as $y = x^{\frac{1}{m}}$. Any other finitely generated subgroup $M \subset N^\Q$ such that for every $x \in N^\Q$, there exists $m > 0$ such that $x^m \in M$, is called a \emph{full subgroup} of $N^\Q$. In this case, $N^\Q$ is also the rational Mal'cev completion of $M$. For every $k > 0$, the group $N^k$ is a full subgroup of $N^\Q$. If $M < N^\Q$ is a full subgroup, then $M \cap N$ has finite index in both $N$ and $M$. As a consequence, it is well-known which groups have an isomorphic rational Mal'cev completion.
\begin{Prop}
	\label{prop:samemalcev}
	Let $N_1$ and $N_2$ be two torsion-free nilpotent groups, then $N_1^\Q \approx N_2^\Q$ if and only if the groups are abstractly commensurable.
\end{Prop}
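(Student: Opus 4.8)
The plan is to reduce both implications to the facts about full subgroups recalled above, so that essentially no computation is needed; the rational Mal'cev completion does all the work.

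For the implication ``abstractly commensurable $\Rightarrow$ isomorphic completions'', I would start from finite index subgroups $M_1 < N_1$ and $M_2 < N_2$ together with an isomorphism $\alpha \colon M_1 \to M_2$. A finite index subgroup of a finitely generated torsion-free nilpotent group is in particular a full subgroup of the completion: for every $x \in N_i^\Q$ some power lies in $N_i$, and since $M_i$ has finite index in $N_i$ a further power lies in $M_i$. Hence $M_i^\Q = N_i^\Q$ for $i = 1,2$. It then remains to transport $\alpha$ to the completions. Since every monomorphism of a finitely generated torsion-free nilpotent group extends uniquely to an automorphism of its completion, applying this to $\alpha$ and to $\alpha^{-1}$ and invoking uniqueness shows that $\alpha$ extends to an isomorphism $M_1^\Q \to M_2^\Q$. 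Composing with the two identifications gives $N_1^\Q = M_1^\Q \cong M_2^\Q = N_2^\Q$.

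For the converse, suppose $\psi \colon N_2^\Q \to N_1^\Q$ is an isomorphism of radicable nilpotent groups. Writing $N^\Q := N_1^\Q$, I would view $N_1$ as a full subgroup of $N^\Q$ and set $M := \psi(N_2)$. Since $N_2$ is full in $N_2^\Q$ and $\psi$ is an isomorphism, $M$ is again a full subgroup of $N^\Q$. By the recalled fact that the intersection of a full subgroup with $N_1$ has finite index in both, the subgroup $N_1 \cap M$ has finite index in both $N_1$ and $M$. Finally $\psi$ restricts to an isomorphism $N_2 \to M$, so $N_1$ and $N_2$ possess isomorphic subgroups of finite index, i.e.~they are abstractly commensurable.

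The only genuinely non-formal ingredients are the claims that a finite index subgroup is a full subgroup and that an abstract isomorphism of finitely generated torsion-free nilpotent groups extends (uniquely) to their completions; both are standard properties of the Mal'cev completion recalled above, so I expect no real obstacle. The main point to keep track of is the pair of identifications $M_i^\Q = N_i^\Q$ in the first direction, and, in the second direction, the verification that the image of a full subgroup under an isomorphism of completions is again full.
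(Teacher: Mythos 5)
Your proof is correct and follows exactly the route the paper intends: the paper offers no explicit proof, asserting the proposition as a consequence of the recalled fact that a full subgroup $M < N^\Q$ intersects $N$ in a subgroup of finite index in both, which is precisely the mechanism you use in both directions (finite index subgroups are full, hence share the completion; the image of a full subgroup under an isomorphism of completions is full). The only point slightly beyond what the paper explicitly recalls is your extension of the isomorphism $\alpha\colon M_1 \to M_2$ to the completions --- the paper states the extension property only for self-monomorphisms $N \to N$ --- but the general statement is the standard Mal'cev functoriality and your appeal to it is sound.
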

\noindent Here, abstractly commensurable means that there exist subgroups $M_1 < N_1$ and $M_2 < N_2$ of finite index such that $M_1 \approx M_2$. 

The rational Mal'cev completion $N^\Q$ uniquely corresponds to a rational Lie algebra $\lien^\Q$ under the exponential map, which forms a bijection in this case. Under this correspondence, every automorphism $\phi$ of the group $N^\Q$ corresponds to an automorphism $\psi$ of $\lien^\Q$. We define the eigenvalues of $\phi: N^\Q \to N^\Q$ as the eigenvalues of this Lie algebra automorphism $\psi$. A \emph{positive grading} on $\lien^\Q$ is a decomposition of $$\lien^\Q = \bigoplus_{i = 1}^k \lien_i$$ into a direct sum of rational subspaces $\lien_i$ such that for every $i, j >0$ it holds that $[\lien_i,\lien_j] \subset \lien_{i+j}$. In this case we call it a positive grading on the nilpotent group $N^\Q$ as well. We say that an automorphism $\psi \in \Aut(\lien^\Q)$ preserves the positive grading if $\psi(\lien_i) = \lien_i$ for all $i$. 

If $\varphi: N \to N$ is a monomorphism, it also induces monomorphisms $$\varphi_i: \faktor{\sqrt[N]{\gamma_i(N)}}{ \sqrt[N]{\gamma_{i+1}(N)}} \approx \Z^{m_i} \to  \faktor{\sqrt[N]{\gamma_i(N)}}{ \sqrt[N]{\gamma_{i+1}(N)}} \approx \Z^{m_i}$$ on the quotients of the isolators of the lower central series. We define the eigenvalues of $\varphi$ as the collection of all eigenvalues of these maps $\varphi_i$. On the other hand, by extending $\varphi: N \to N$, we also get an automorphism $\phi: N^\Q \to N^\Q$, and the eigenvalues of $\varphi$ are equal the eigenvalues of $\phi$. The following lemma will be useful further in the paper.
\begin{Lem}
	\label{lem:inducedmap}
	Let $\phi: N^\Q \to N^\Q$ be an automorphism for which the induced map $$\overline{\phi}: \faktor{N^\Q}{[N^\Q,N^\Q]} \to \faktor{N^\Q}{[N^\Q,N^\Q]}$$ is the identity map, then the automorphism $\phi$ is unipotent. In particular, if $\phi$ has finite order as well, then it must be the trivial automorphism.
\end{Lem}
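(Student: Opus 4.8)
The plan is to work through the lower central series filtration of $N^\Q$ and show that the hypothesis on the abelianization forces $\phi$ to act as the identity on each successive quotient, which is exactly the statement that $\phi$ is unipotent. The key structural fact is that the isolators $\sqrt[N]{\gamma_i(N)}$, or equivalently the terms $\gamma_i(N^\Q)$ of the lower central series of the radicable group, give a $\phi$-invariant filtration $N^\Q = \gamma_1(N^\Q) \supset \gamma_2(N^\Q) \supset \cdots \supset \gamma_{c+1}(N^\Q) = \{e\}$ whose successive quotients are finite-dimensional rational vector spaces on which $\phi$ induces linear automorphisms $\overline{\phi}_i$. Passing to the associated Lie algebra $\lien^\Q$ via the exponential map turns everything linear, so it suffices to prove that the corresponding Lie algebra automorphism $\psi$ is unipotent, i.e.\ all its eigenvalues equal $1$.

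First I would set up the induced maps $\overline{\phi}_i$ on each graded piece $\gamma_i(N^\Q)/\gamma_{i+1}(N^\Q)$. The case $i=1$ is precisely the abelianization $N^\Q/[N^\Q,N^\Q]$, on which $\phi$ is the identity by hypothesis. The crucial step is then to propagate this to higher terms: the commutator bracket induces surjective maps from tensor products of lower graded pieces onto higher ones, so that the action of $\overline{\phi}$ on $\gamma_i/\gamma_{i+1}$ is determined by its action on the lower quotients via the bracket, which is compatible with $\phi$ since $\phi$ is a homomorphism. Because $\phi$ acts as the identity on $\gamma_1/\gamma_2$, an inductive argument shows that each eigenvalue of $\overline{\phi}_i$ is a product of $i$ eigenvalues of $\overline{\phi}_1$, all of which are $1$. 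Hence every eigenvalue of every $\overline{\phi}_i$ equals $1$, which is the definition of $\psi$ being unipotent once we recall that the eigenvalues of $\phi$ are by definition those of the $\overline{\phi}_i$ collectively.

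For the second assertion, suppose in addition that $\phi$ has finite order, say $\phi^m = \mathrm{id}$. Then the Lie algebra automorphism $\psi$ is simultaneously unipotent and of finite order. A unipotent automorphism can be written as $\psi = \exp(\nu)$ for a nilpotent derivation $\nu$ (the logarithm, which converges since $\nu$ is nilpotent), and $\psi^m = \exp(m\nu) = \mathrm{id}$ forces $m\nu = 0$, hence $\nu = 0$ and $\psi = \mathrm{id}$. Equivalently, over a field of characteristic zero the only element that is both unipotent and torsion is the identity, since a unipotent matrix $U$ with $U^m = I$ satisfies $(U-I)$ nilpotent while $U$ has minimal polynomial dividing $x^m - 1$, which has no repeated roots, forcing $U = I$. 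Translating back through the exponential bijection gives $\phi = \mathrm{id}$.

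The main obstacle I anticipate is making the inductive propagation of the eigenvalue condition fully rigorous, namely carefully justifying that the eigenvalues of $\overline{\phi}_i$ on $\gamma_i/\gamma_{i+1}$ are indeed products of eigenvalues of $\overline{\phi}_1$. This is cleanest to argue on the Lie algebra side: one uses that $\lien^\Q$ is generated as a Lie algebra by $\lien^\Q/[\lien^\Q,\lien^\Q]$, so the graded pieces of the lower central series are spanned by iterated brackets of degree-one elements, and $\psi$ being the identity on the degree-one part together with its being a Lie algebra homomorphism pins down the eigenvalues on all higher pieces as $1$. Once this linear-algebraic bookkeeping is handled, the rest of the argument is formal.
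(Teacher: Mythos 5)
Your proof is correct and follows essentially the same route as the paper: an induction along the lower central series showing that $\phi$ acts trivially (equivalently, with all eigenvalues $1$) on each quotient $\faktor{\gamma_i(N^\Q)}{\gamma_{i+1}(N^\Q)}$, which gives unipotence, followed by the observation that in characteristic zero a finite-order (hence semisimple) unipotent automorphism is the identity. Your minimal-polynomial and logarithm arguments simply supply the details behind the paper's citation of the standard fact that finite-order automorphisms are semisimple, so the two proofs coincide in substance.
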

\begin{proof}
	From the assumption on $\overline{\phi}$, one can show by induction that the induced maps by $\phi$ on $\faktor{\gamma_{i}(N^\Q)}{\gamma_{i+1}(N^\Q)}$ are trivial. This immediately implies that $\phi$ is unipotent, because the group $N^\Q$ is nilpotent. Since automorphisms of finite order are semisimple, the last part follows as well.
\end{proof}

\paragraph{Virtually polycyclic groups}

We call a group $\Gamma$ polycyclic if it has a subnormal series $$\{e\} = \Gamma_0  \triangleleft \Gamma_1 \triangleleft \ldots \triangleleft \Gamma_l = \Gamma$$ with $\Gamma_{i}$ normal in $\Gamma_{i+1}$ and the quotient $\faktor{\Gamma_{i+1}}{\Gamma_i}$ cyclic. A group $\Gamma$ is called virtually polycyclic if it has a subgroup of finite index which is polycyclic. Note that for virtually polycyclic groups the subgroups $\Gamma^k < \Gamma$ are always of finite index in $\Gamma$ by \cite[Lemma 4.4.]{ragh72-1}. Every finitely generated nilpotent group is automatically polycyclic.

Since cyclic groups are abelian, every (virtually) polycyclic group is (virtually) solvable. In fact, virtually polycyclic groups are exactly the finitely generated virtually solvable groups which satisfy the maximality condition, i.e.~every subgroup is finitely generated. The number of infinite cyclic groups $\faktor{\Gamma_{i+1}}{\Gamma_i}$ of a polycyclic group $\Gamma$ does not depend on the choice of subnormal series. It forms hence an invariant of the group $\Gamma$ which is called \emph{the Hirsch length} $h(\Gamma)$. The Hirsch length of a virtually polycyclic group is defined as the Hirsch length of a polycyclic subgroup of finite index. Note that if $\Gamma^\prime < \Gamma$ is a subgroup and $h(\Gamma^\prime) = h(\Gamma)$, then $\Gamma^\prime$ is a finite index subgroup of $\Gamma$. In particular, the image $\varphi(\Gamma)$ of a monomorphism $\varphi: \Gamma \to \Gamma$ has the same Hirsch length as $\Gamma$ and thus has finite index in $\Gamma$. This shows that the first condition to be a strongly scale-invariant monomorphism in Definition \ref{def:ssi} is automatic for virtually polycyclic groups.

A crucial tool for this paper will be extending monomorphisms to automorphisms of a linear algebraic group. Recall that a (real) \emph{linear algebraic group} defined over a subfield $K \subset \R$ is a subgroup $G \subset \GL(n,\R)$ given by the zeros of a finite number of polynomials over $K$, equipped with the Zariski-topology. An algebraic morphism $G \to G$ defined over $K$ is a group morphism of which the coordinate functions can be written as polynomials over $K$. The unipotent radical is the set of unipotent elements in the radical of $G$, where the radical of $G$ is the maximal connected normal solvable subgroup. Every linear algebraic group can also be considered as a Lie group with a finite number of connected components. The group of $K$-rational points of $G$ is given by the intersection $G(K) = G \cap \GL(n,K)$. For more details about linear algebraic groups we refer to \cite{bore91-1}.

\begin{Def}
	\label{def:hull}
	Let $\Gamma$ be a virtually polycyclic group. A $\Q$-algebraic hull $\Gamma^\Q$ is a linear algebraic group definied over $\Q$ with an injective morphism $i: \Gamma \to \Gamma^\Q$ satisfying the following conditions:
	\begin{enumerate}[$(1)$]
		\item The image $i(\Gamma) < \Gamma^\Q$ is a Zariski-dense subgroup of the $\Q$-rational points of $\Gamma^\Q$;
		\item If $U(\Gamma^\Q)$ is the unipotent radical of $\Gamma^\Q$, then $\dim(U(\Gamma^\Q)) = h(\Gamma)$;
		\item The centralizer of the unipotent radical in $\Gamma^\Q$ is contained in $U(\Gamma^\Q)$, or in symbols $$C_{\Gamma^\Q}(U(\Gamma^\Q)) = Z(U(\Gamma^\Q)),$$ where $Z(U(\Gamma^\Q))$ is the center of $U(\Gamma^\Q)$.
	\end{enumerate}
\end{Def}
\noindent Usually we identify $\Gamma$ with its image under the map $i$. Although the definition is rather technical, the following theorem shows the importance for studying monomorphisms on virtually polycyclic groups, see \cite[Corollary 4.7.]{dere18-1}:

\begin{Thm}
	Let $\varphi: \Gamma \to \Gamma$ be a monomorphism and $\Gamma^\Q$ a $\Q$-algebraic hull of $\Gamma$. There exists a unique algebraic automorphism $\Phi: \Gamma^\Q \to \Gamma^\Q$ defined over $\Q$ such that $\Phi(\gamma) = \varphi(\gamma)$ for all $\gamma \in \Gamma$.
\end{Thm}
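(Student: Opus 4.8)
The plan is to reduce the statement to the \emph{uniqueness} (rigidity) of the $\Q$-algebraic hull and to exploit that $\varphi(\Gamma)$ has finite index in $\Gamma$. Recall that uniqueness asserts: if $(A_1,\iota_1)$ and $(A_2,\iota_2)$ are two $\Q$-algebraic hulls of one and the same abstract group, then there is a unique $\Q$-defined algebraic isomorphism $A_1 \to A_2$ intertwining $\iota_1$ and $\iota_2$. I would take this as the main input and manufacture two hulls of $\Gamma$ between which $\varphi$ becomes an algebraic isomorphism. Throughout I identify $\Gamma$ with its image in $\Gamma^\Q$, so that $(\Gamma^\Q,\mathrm{id})$ is a hull of $\Gamma$ by hypothesis.

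First I would set $H = \overline{\varphi(\Gamma)}$, the Zariski closure in $\Gamma^\Q$. Since $\varphi(\Gamma)$ has finite index in the Zariski-dense group $\Gamma$, the closed subgroup $H$ has finite index in $\Gamma^\Q$ and therefore contains the identity component $(\Gamma^\Q)^0$; in particular $U(\Gamma^\Q) \subseteq (\Gamma^\Q)^0 \subseteq H$. As $U(\Gamma^\Q)$ is a connected normal unipotent subgroup of $H$, this forces $U(H) = U(\Gamma^\Q)$. I would then verify that $(H,\mathrm{incl})$ is a $\Q$-algebraic hull of $\varphi(\Gamma)$ in the sense of Definition \ref{def:hull}: Zariski-density and definedness over $\Q$ hold by construction; condition $(2)$ holds because $\dim U(H) = \dim U(\Gamma^\Q) = h(\Gamma) = h(\varphi(\Gamma))$; and condition $(3)$ is inherited, since $C_H(U(H)) = H \cap C_{\Gamma^\Q}(U(\Gamma^\Q)) = H \cap Z(U(\Gamma^\Q)) = Z(U(H))$. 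Composing the inclusion with the abstract isomorphism $\varphi\colon \Gamma \xrightarrow{\sim} \varphi(\Gamma)$ then exhibits $(H,\varphi)$ as a $\Q$-algebraic hull of $\Gamma$ as well.

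Now both $(\Gamma^\Q,\mathrm{id})$ and $(H,\varphi)$ are $\Q$-algebraic hulls of $\Gamma$, so uniqueness provides a unique $\Q$-defined algebraic isomorphism $\Phi\colon \Gamma^\Q \to H$ with $\Phi(\gamma) = \varphi(\gamma)$ for all $\gamma \in \Gamma$. It remains to see that $\Phi$ is an automorphism of $\Gamma^\Q$, i.e. that $H = \Gamma^\Q$. Here I would compare component groups: $\Phi$ is an isomorphism, so $\Gamma^\Q$ and $H$ have isomorphic (hence equinumerous) finite component groups, while $(\Gamma^\Q)^0 = H^0 \subseteq H \subseteq \Gamma^\Q$ gives $[\Gamma^\Q:H] = |\pi_0(\Gamma^\Q)|/|\pi_0(H)| = 1$. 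Thus $H = \Gamma^\Q$ and $\Phi$ is an algebraic automorphism defined over $\Q$ restricting to $\varphi$. Its uniqueness is immediate: any two algebraic maps agreeing on the Zariski-dense subgroup $\Gamma$ coincide.

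The main obstacle is the rigidity of the hull invoked above, and this is exactly where condition $(3)$ earns its keep. Its proof runs through the unipotent radical: morphisms between the dense subgroups are controlled on $U(\Gamma^\Q)$ by Mal'cev rigidity of nilpotent groups (a finitely generated torsion-free nilpotent group determines its rational Mal'cev completion, which sits inside the hull as the unipotent radical), giving a canonical algebraic identification of the two unipotent radicals. Condition $(3)$ then realizes each hull as a subgroup of $\Aut(U(\Gamma^\Q))$ via the conjugation action, with kernel exactly $Z(U(\Gamma^\Q))$, so the identification of unipotent radicals propagates uniquely to the whole hull. The delicate point I expect to require the most care is checking that this propagated extension is genuinely algebraic and defined over $\Q$, rather than merely an abstract group isomorphism.
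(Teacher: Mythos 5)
The paper itself gives no proof of this statement: it is quoted verbatim from \cite[Corollary 4.7.]{dere18-1}, so there is no internal argument to compare against. Your reduction is sound, and it is essentially the expected derivation. The individual steps all check out: the Zariski closure $H$ of the finite-index subgroup $\varphi(\Gamma)$ has finite index in $\Gamma^\Q$ (finitely many cosets of $H$ cover the dense subgroup $\Gamma$, hence cover $\Gamma^\Q$), so $H \supseteq (\Gamma^\Q)^0$ and $U(H) = U(H^0) = U((\Gamma^\Q)^0) = U(\Gamma^\Q)$; condition $(2)$ holds since $h(\varphi(\Gamma)) = h(\Gamma)$ because $\varphi$ is injective and Hirsch length is an isomorphism invariant; condition $(3)$ passes to $H$ exactly as you compute, using $Z(U(\Gamma^\Q)) \subseteq U(\Gamma^\Q) \subseteq H$; and the component-counting argument $[\Gamma^\Q : H] = |\pi_0(\Gamma^\Q)|/|\pi_0(H)| = 1$ correctly forces $H = \Gamma^\Q$, with uniqueness of $\Phi$ immediate from Zariski density.

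The one point requiring care is the provenance of your main input. Within this paper, uniqueness of the hull is stated only as a \emph{consequence} of the very theorem you are proving (``In particular, if a $\Q$-algebraic hull exists, it is unique up to $\Q$-isomorphism''), so citing that remark would be circular. You must instead invoke the rigidity theorem proven independently for polycyclic-by-finite groups in \cite{bg06-1} (and extended in \cite{dere18-1}): an abstract isomorphism between Zariski-dense subgroups of two $\Q$-algebraic hulls extends uniquely to a $\Q$-defined algebraic isomorphism of the hulls. Your closing paragraph rightly identifies this as where the real work lies, and your sketch of its proof is directionally correct, with one slip: condition $(3)$ does not realize the hull as a subgroup of $\Aut(U(\Gamma^\Q))$; rather, the conjugation action has kernel exactly $C_{\Gamma^\Q}(U(\Gamma^\Q)) = Z(U(\Gamma^\Q))$, so it is the quotient by $Z(U(\Gamma^\Q))$ that embeds, and the extension must be assembled accordingly. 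You are also right that verifying the propagated map is algebraic and defined over $\Q$ is the delicate part; this is precisely what the rigidity results of \cite{bg06-1} supply, so taking them as a black box makes your argument complete.
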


\noindent In particular, if a $\Q$-algebraic hull exists, it is unique up to $\Q$-isomorphism of linear algebraic groups, so we will call it the $\Q$-algebraic hull of $\Gamma$.

\begin{Ex}
	If $N$ is a torsion-free nilpotent group, then it embeds as a lattice in a simply connected and connected nilpotent Lie group $N^\R$. The group $N^\R$ has a unique structure as a unipotent linear algebraic group defined over $\Q$. A direct check shows that $N^\R$ forms the $\Q$-algebraic hull of $N$, with the rational Mal'cev completion $N^\Q$ equal to the subgroup of rational points $N^\R(\Q)$ of the algebraic group $N^\R$.
\end{Ex}

\noindent  Every virtually polycyclic group $\Gamma$ has a unique maximal normal nilpotent subgroup, called the Fitting subgroup. If $\Gamma$ has a $\Q$-algebraic hull $\Gamma^\Q$, then its Fitting subgroup $N$ is equal to $N = \Gamma \cap \left(   U(\Gamma^\Q)\right)$ by \cite[Proposition 4.4.]{bg06-1}. On the other hand, every virtually polycyclic group also has a maximal finite normal subgroup $H \triangleleft \Gamma$. By \cite[Corollary 5.10.]{dere18-1} we know that $H=1$ is equivalent to the existence of a $\Q$-algebraic hull.

\begin{Thm}
	\label{thm:existQ}
	A virtually polycyclic group has a $\Q$-algebraic hull if and only if every finite normal subgroup is trivial.
\end{Thm}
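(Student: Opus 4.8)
The two implications are strongly asymmetric: the existence of a hull forcing triviality of finite normal subgroups is short and formal, whereas the converse \emph{is} the construction of the hull and carries essentially all the difficulty. For the forward implication, suppose $\Gamma^\Q$ exists and let $H$ be the maximal finite normal subgroup of $\Gamma$. Being finite, $H$ is Zariski-closed, and being normalised by the Zariski-dense subgroup $\Gamma$ its normaliser (a Zariski-closed subgroup) must be all of $\Gamma^\Q$, so $H$ is normal in $\Gamma^\Q$. The unipotent radical $U(\Gamma^\Q)$ is normal and torsion-free, so $H \cap U(\Gamma^\Q) = \{e\}$; since two normal subgroups with trivial intersection centralise each other, $H$ centralises $U(\Gamma^\Q)$. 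Condition $(3)$ then places $H$ inside $Z(U(\Gamma^\Q)) \subset U(\Gamma^\Q)$, and together with $H \cap U(\Gamma^\Q) = \{e\}$ this forces $H = \{e\}$.

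For the converse, assume that all finite normal subgroups of $\Gamma$ are trivial. The first step is to deduce that the Fitting subgroup $N$ is torsion-free: the torsion elements of the finitely generated nilpotent group $N$ form a finite characteristic subgroup of $N$, hence a finite normal subgroup of $\Gamma$, hence trivial. I would then build the hull by a Mostow-type semisimple splitting. Embedding the torsion-free nilpotent group $N$ in its real Mal'cev group produces a unipotent group $N^\R$ defined over $\Q$, and the conjugation action of $\Gamma$ extends to an action on $N^\R$; taking multiplicative Jordan decompositions of these automorphisms separates a diagonalisable part, generating a torus $T$, from unipotent parts which enlarge $N^\R$ to a unipotent group $U$ of dimension exactly $h(\Gamma)$. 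Assembling these, with the finite pieces of $\Gamma/N$ appearing as non-identity components, yields a linear algebraic group into which $\Gamma$ embeds Zariski-densely, realising conditions $(1)$ and $(2)$.

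The whole difficulty lies in this converse and is twofold. First, one must arrange that the construction is defined over $\Q$: the eigenvalues of the conjugation action are algebraic units, so the semisimple parts a priori live only over a number field, and the descent to $\Q$ requires combining the rational Mal'cev coordinates on $N$ with a rational structure on the diagonalisable part. Second, and most delicately, one must verify the centralizer condition $C_{\Gamma^\Q}(U) = Z(U)$; this is precisely where the hypothesis is used, since any torsion element lying in $C_{\Gamma^\Q}(U)$ but outside $U$ would, upon intersecting $\Gamma$ and passing to a suitable power, manufacture a nontrivial finite normal subgroup of $\Gamma$. A clean way to organise the argument is to first pass to a torsion-free normal subgroup $\Lambda$ of finite index, where the existence of the hull is classical (Mostow, Raghunathan), and then to extend $\Lambda^\Q$ over the finite quotient $\Gamma/\Lambda$; the triviality of finite normal subgroups of $\Gamma$ is exactly what guarantees that this extension can be carried out without introducing spurious centralizing torsion, so that condition $(3)$ survives.
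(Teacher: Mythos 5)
Your forward implication is complete, correct, and is the standard argument: any finite normal subgroup $H$ of $\Gamma$ is Zariski-closed, its normalizer is a Zariski-closed subgroup containing the Zariski-dense subgroup $\Gamma$, hence $H$ is normal in $\Gamma^\Q$; it meets the torsion-free group $U(\Gamma^\Q)$ trivially, so the two normal subgroups commute, and condition $(3)$ of Definition \ref{def:hull} then traps $H$ inside $Z(U(\Gamma^\Q)) \subset U(\Gamma^\Q)$, forcing $H = \{e\}$. (Using the \emph{maximal} finite normal subgroup is harmless but unnecessary; the argument applies to any finite normal subgroup.) Be aware, though, that the paper itself contains no proof of this theorem: it is quoted from \cite[Corollary 5.10.]{dere18-1}, with the existence direction going back to the algebraic-hull constructions of Baues--Grunewald \cite{bg06-1}. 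So your proposal must stand on its own, and the easy half does.

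The converse, however, is a roadmap rather than a proof, and the gap sits exactly at the step you yourself flag as most delicate. Reducing to a torsion-free normal subgroup $\Lambda$ of finite index and invoking the classical hull of $\Lambda$ (Mostow, Raghunathan) is legitimate, but your key sentence --- that triviality of finite normal subgroups of $\Gamma$ ``is exactly what guarantees that this extension can be carried out without introducing spurious centralizing torsion'' --- asserts precisely the conclusion that needs proving. A genuine argument must do three things, none of which your sketch carries out or even precisely states: (i) use the unique extension of automorphisms of $\Lambda$ to algebraic automorphisms of its hull to form a pushout such as $G = \left(\Lambda^{\mathrm{hull}} \rtimes \Gamma\right)/\left\{(\lambda^{-1},\lambda) \suchthat \lambda \in \Lambda\right\}$, and show that $G$ carries the structure of a linear algebraic group defined over $\Q$ in which $\Gamma$ sits Zariski-densely --- your worry about descent to $\Q$ is real and is not resolved by naming it; (ii) analyze the centralizer $C_G(U)$ in this provisional group and show it decomposes as $Z(U) \times F$ with $F$ finite and normal, which is where the actual work with the algebraic structure happens and which can fail for a carelessly chosen splitting (the torus arising from the Jordan decompositions need not act faithfully on $U$ unless this is arranged); and (iii) observe that $F \cap \Gamma$ is a finite normal subgroup of $\Gamma$, hence trivial by hypothesis, so that passing to $G/F$ restores condition $(3)$ while $\Gamma$ still embeds and conditions $(1)$ and $(2)$ survive the quotient. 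In short: the easy direction is fully proved, the hard direction is correctly located and plausibly outlined, but the construction and the verification of condition $(3)$ --- the entire content of the theorem --- are deferred rather than done, which is presumably why the paper outsources them to \cite{dere18-1}.
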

\noindent Almost-crystallographic groups are exactly the finitely generated virtually nilpotent groups with only the trivial group as a finite normal subgroup and thus always have a $\Q$-algebraic hull.

\section{Monomorphisms of virtually polycyclic groups}
First we give some general properties of strongly scale-invariant morphisms, then we study the specific situation of virtually polycyclic groups using the $\Q$-algebraic hull and finally we apply the methods in the context of the Reidemeister number.
\subsection{Properties of strongly scale-invariant groups}

\label{sec:generalprop}

In this first part we give some general lemmas, which will allow us to simplify the study of strongly scale-invariant monomorphisms of virtually polycyclic groups. The main goal is to investigate how being strongly scale-invariant behaves under considering iterates of morphisms and under taking finite index subgroups of a group.

The first lemma gives a different characterization of strongly scale-invariant monomorphisms in terms of subgroups which are mapped onto themselves.

\begin{Lem}
	\label{lem:defwithH}
Let $\varphi: \Gamma \to \Gamma$ be a monomorphism with $\varphi(\Gamma)$ of finite index in $\Gamma$, then $\varphi$ is strongly scale-invariant if and only if every subgroup $H < \Gamma$ with $\varphi(H) = H$ is finite.
\end{Lem}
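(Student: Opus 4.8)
The plan is to work directly with the subgroup $K = \bigcap_{n>0}\varphi^n(\Gamma)$ and to show that each of the two conditions in the statement is equivalent to $K$ being finite. I would begin by recording that $K$ is an intersection of the descending chain $\varphi(\Gamma) \supseteq \varphi^2(\Gamma) \supseteq \cdots$, the descent being immediate from $\varphi^{m}(\Gamma) = \varphi^{m-n}(\varphi^n(\Gamma)) \subseteq \varphi^n(\Gamma)$ for $m \geq n$; in particular $K$ is itself a subgroup of $\Gamma$.

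For the forward implication I would assume $\varphi$ is strongly scale-invariant, so that $K$ is finite, and take any subgroup $H < \Gamma$ with $\varphi(H) = H$. Iterating the invariance gives $H = \varphi^n(H) \subseteq \varphi^n(\Gamma)$ for every $n > 0$, whence $H \subseteq K$ and $H$ is finite. This direction uses nothing beyond the fixed-point condition $\varphi(H) = H$.

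The converse is where the content lies. Assuming that every subgroup fixed by $\varphi$ is finite, the strategy is to verify that $K$ itself satisfies $\varphi(K) = K$, so that the hypothesis applies to $H = K$ and forces $K$ to be finite, i.e.\ $\varphi$ to be strongly scale-invariant. Here I would invoke injectivity of $\varphi$ in the form $\varphi(\bigcap_n A_n) = \bigcap_n \varphi(A_n)$, applied to $A_n = \varphi^n(\Gamma)$, which yields
\[
\varphi(K) = \bigcap_{n>0}\varphi^{n+1}(\Gamma) = \bigcap_{m \geq 2}\varphi^{m}(\Gamma).
\]
It then remains to note that dropping the $m=1$ term does not change the intersection: since $\bigcap_{m\geq 2}\varphi^m(\Gamma) \subseteq \varphi^2(\Gamma) \subseteq \varphi(\Gamma)$, intersecting further with $\varphi^1(\Gamma)$ has no effect, so $\bigcap_{m\geq 2}\varphi^m(\Gamma) = \bigcap_{m\geq 1}\varphi^m(\Gamma) = K$ and hence $\varphi(K) = K$.

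The one point deserving care, and thus the main (if modest) obstacle, is the identity $\varphi(\bigcap_n A_n) = \bigcap_n \varphi(A_n)$: the inclusion $\subseteq$ holds for any map, while $\supseteq$ is precisely where injectivity of $\varphi$ is needed and fails in general. Everything else is routine bookkeeping with the descending chain $\varphi^n(\Gamma)$, and I would note that the standing assumption that $\varphi(\Gamma)$ has finite index in $\Gamma$ is not actually used in this lemma beyond being part of the hypotheses.
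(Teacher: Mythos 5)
Your proof is correct and follows essentially the same route as the paper: the forward direction is identical, and in the converse you show $\varphi(K)=K$ for $K=\bigcap_{n>0}\varphi^n(\Gamma)$ and apply the hypothesis to $K$, exactly as the paper does. The only cosmetic difference is that you package the injectivity step as the general identity $\varphi\bigl(\bigcap_n A_n\bigr)=\bigcap_n \varphi(A_n)$ for injective maps, whereas the paper carries out the same unique-preimage chase explicitly; your observation that the finite-index hypothesis is never actually invoked also matches the paper's argument.
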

\noindent In other words, a monomorphism with image of finite index is strongly scale-invariant if and only if it can only induce an automorphism on finite subgroups.

\begin{proof}
First assume that $\varphi$ is strongly scale-invariant. Let $H < \Gamma$ be any subgroup satisfying $\varphi(H) = H$, then for every $n > 0$ we have $\varphi^n(H) = H$ as well. In particular, $$H = \displaystyle \bigcap_{n > 0} \varphi^n(H) < \bigcap_{n > 0} \varphi^n(\Gamma)$$ and thus the subgroup $H$ must be finite. 

For the other implication we assume that every subgroup $H <\Gamma$ with $\varphi(H) = H$ is finite. Note that $H_0 = \displaystyle \bigcap_{n > 0} \varphi^n(\Gamma)$ is a subgroup of $\Gamma$ as the intersection of subgroups, hence in order to show that $\varphi$ is strongly scale-invariant, it suffices to prove that $\varphi(H_0) = H_0$. From its definition, it is clear that $\varphi(H_0) < H_0$. For the other inclusion, assume that $x \in H_0$, then we will show that $x \in \varphi(H_0)$. Since $x \in \varphi(\Gamma)$ and $\varphi$ is injective, there exists a unique $y \in \Gamma$ with $x = \varphi(y)$. We have to prove that $y \in H_0$, or thus that $y \in \varphi^n(\Gamma)$ for every $n > 0$. Take $n > 0$ arbitrary, then from $x \in \varphi^{n+1}(\Gamma)$ we know that $x = \varphi^{n+1}(z)$ with $z \in \Gamma$. By the uniqueness of $y$, we know that $y = \varphi^n(z)$ and therefore $y \in \varphi^n(\Gamma)$. We conclude that $H_0 = \varphi(H_0)$, showing that $H$ is finite.
\end{proof}

As an application of the previous lemma, we show that being strongly scale-invariant behaves well under taking iterates of monomorphisms.

\begin{Lem}
	\label{lem:iterate}
Let $\varphi: \Gamma \to \Gamma$ be a monomorphism and $k > 0$ a natural number, then $\varphi$ is strongly scale-invariant if and only if $\varphi^k$ is strongly scale-invariant.
\end{Lem}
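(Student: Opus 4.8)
The plan is to prove the equivalence via the characterization from Lemma \ref{lem:defwithH}, namely that a monomorphism with image of finite index is strongly scale-invariant precisely when every subgroup $H < \Gamma$ with $\varphi(H) = H$ is finite. Before applying it, I would first check that both $\varphi$ and $\varphi^k$ have image of finite index so that the hypothesis of Lemma \ref{lem:defwithH} is available for each: this is immediate since $\varphi^k(\Gamma) < \varphi(\Gamma) < \Gamma$, so if $[\Gamma : \varphi(\Gamma)]$ is finite then $[\Gamma : \varphi^k(\Gamma)]$ is finite as a product of finite indices, and conversely if $[\Gamma : \varphi^k(\Gamma)]$ is finite then the intermediate subgroup $\varphi(\Gamma)$ has finite index as well. (In the virtually polycyclic setting this step is in any case automatic by the Hirsch-length argument, but I would phrase it so the lemma stands alone.)

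With both images of finite index, the statement reduces to showing that the class of $\varphi$-invariant subgroups on which $\varphi$ restricts to an automorphism coincides, up to finiteness, with that of $\varphi^k$. The easy direction is that if $\varphi(H) = H$ then $\varphi^k(H) = H$, so every subgroup fixed setwise by $\varphi$ is also fixed by $\varphi^k$; hence if all $\varphi^k$-invariant-as-automorphism subgroups are finite, so are all such subgroups for $\varphi$. This gives: $\varphi^k$ strongly scale-invariant $\Rightarrow$ $\varphi$ strongly scale-invariant.

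The substantive direction is the converse. Suppose $\varphi$ is strongly scale-invariant and let $H < \Gamma$ satisfy $\varphi^k(H) = H$; I want to produce a genuinely $\varphi$-invariant subgroup to which I can apply the hypothesis. The natural candidate is $$\widetilde{H} = \sum_{j=0}^{k-1} \varphi^j(H),$$ that is, the subgroup generated by $H, \varphi(H), \ldots, \varphi^{k-1}(H)$. Using $\varphi^k(H) = H$ one checks that $\varphi(\widetilde{H}) = \widetilde{H}$: applying $\varphi$ cyclically permutes the generating pieces, with $\varphi(\varphi^{k-1}(H)) = \varphi^k(H) = H$ folding back. Since $\varphi$ is strongly scale-invariant, Lemma \ref{lem:defwithH} forces $\widetilde{H}$ to be finite, and as $H < \widetilde{H}$ the subgroup $H$ is finite as well. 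Applying Lemma \ref{lem:defwithH} to $\varphi^k$ then yields that $\varphi^k$ is strongly scale-invariant.

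The main obstacle I anticipate is verifying that the generated subgroup $\widetilde{H}$ is honestly $\varphi$-invariant rather than merely containing $\varphi(H)$: one must confirm both inclusions $\varphi(\widetilde{H}) \subset \widetilde{H}$ and $\widetilde{H} \subset \varphi(\widetilde{H})$, and the latter uses injectivity together with $\varphi^k(H) = H$ to recover $H$ inside the image. A cleaner alternative, avoiding the join of non-commuting subgroups, is to argue directly with Lemma \ref{lem:defwithH}: if $\varphi^k(H) = H$, consider instead whether $H$ itself can be enlarged to a $\varphi$-invariant subgroup, or simply run the fixed-point analysis on $H_0 = \bigcap_{n>0}\varphi^n(\Gamma)$ and note $\bigcap_{n>0}(\varphi^k)^n(\Gamma) = H_0$ since the sequences $\varphi^n(\Gamma)$ and $\varphi^{kn}(\Gamma)$ are cofinal. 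This last observation is in fact the most efficient route and I would likely use it: the nested intersection defining strong scale-invariance is unchanged when passing to the subsequence of $k$-th iterates, so the finiteness of $H_0$ is literally the same condition for $\varphi$ and for $\varphi^k$.
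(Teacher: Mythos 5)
Your proposal is correct, and it actually contains two proofs. The main body reproduces the paper's argument almost exactly: the index bookkeeping $[\Gamma:\varphi^k(\Gamma)] = [\Gamma:\varphi(\Gamma)]^k$, the easy direction via $\varphi(H)=H \Rightarrow \varphi^k(H)=H$, and for the substantive direction the join $\widetilde{H} = \langle H, \varphi(H),\ldots,\varphi^{k-1}(H)\rangle$ with $\varphi$ cyclically permuting the generating pieces, all run through the characterization of Lemma \ref{lem:defwithH}. (One small simplification there: your worry about the inclusion $\widetilde{H} \subset \varphi(\widetilde{H})$ is a non-issue, since for any homomorphism $\varphi(\langle A \rangle) = \langle \varphi(A) \rangle$, so $\varphi(\widetilde{H}) = \langle \varphi(H),\ldots,\varphi^{k-1}(H), \varphi^k(H)\rangle = \widetilde{H}$ follows immediately from $\varphi^k(H)=H$, without invoking injectivity.) Your closing observation, however, is a genuinely different and strictly simpler route that the paper does not take: since the chain $\varphi^n(\Gamma)$ is nested decreasing, the subsequence $\varphi^{kn}(\Gamma)$ is cofinal in it, so $\bigcap_{n>0}(\varphi^k)^n(\Gamma) = \bigcap_{n>0}\varphi^n(\Gamma)$ literally as sets, and once the finite-index condition is transferred (which your intermediate-subgroup argument does in both directions), strong scale-invariance of $\varphi$ and of $\varphi^k$ is the \emph{same} condition by Definition \ref{def:ssi}. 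This bypasses Lemma \ref{lem:defwithH} and the join construction entirely; what the paper's route buys instead is a rehearsal of the invariant-subgroup characterization, which is the tool reused later (e.g.\ in Lemma \ref{lem:subgroup} and the proof of Theorem \ref{thm:main}), whereas your direct argument is more elementary and works verbatim for any monomorphism of any group.
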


\begin{proof}
Note that $ [\Gamma:\varphi^k(\Gamma)] = \displaystyle \prod_{i=1}^k [\varphi^{i-1}(\Gamma):\varphi^{i}(\Gamma)] =\left([\Gamma:\varphi(\Gamma)]\right)^k$ and thus the condition of having an image of finite index is immediate for both implications.

 First consider the reverse implication, namely that $\varphi^k$ strongly scale-invariant implies $\varphi$ strongly scale-invariant. If $H$ is any subgroup with $\varphi(H) = H$, then $\varphi^k(H) = H$ as wel. So, under the assumption that $\varphi^k$ is strongly scale-invariant, it follows that $H$ is finite and thus the lemma follows from Lemma \ref{lem:defwithH}.
	
	For the other implication, assume that $\varphi$ is strongly scale-invariant and that $H$ is a subgroup such that $\varphi^k(H) = H$. Consider the subgroup $\tilde{H}$ generated by the subgroups $H, \varphi(H), \ldots, \varphi^{k-1}(H)$. It is clear that every aforementioned generator of $\tilde{H}$ is the image of another generator under $\varphi$ and thus $\varphi(\tilde{H}) = \tilde{H}$. We conclude that $\tilde{H}$ is finite, hence $H$ as well as  a subgroup of $\tilde{H}$.
\end{proof}

From Definition \ref{def:ssi}, the next observation is obvious.

\begin{Lem}
	\label{lem:restrict}
If $\varphi:\Gamma \to \Gamma$ is a strongly scale-invariant monomorphism of a virtually polycyclic group and $\Gamma^\prime$ is a  $\varphi$-invariant subgroup, then the restriction $\restr{\varphi}{\Gamma^\prime}$ is strongly scale-invariant. 
\end{Lem}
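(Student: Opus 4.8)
The plan is to directly verify the two conditions in Definition \ref{def:ssi} for the restriction $\restr{\varphi}{\Gamma^\prime}$. Since $\Gamma^\prime$ is $\varphi$-invariant, the map $\restr{\varphi}{\Gamma^\prime}$ is a well-defined endomorphism of $\Gamma^\prime$, and it is injective because $\varphi$ is. It therefore only remains to check that its image has finite index in $\Gamma^\prime$ and that $\bigcap_{n>0}\varphi^n(\Gamma^\prime)$ is finite.

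For the finite-index condition, I would use that $\Gamma^\prime$, as a subgroup of a virtually polycyclic group, is again virtually polycyclic. As recalled in the preliminaries, every monomorphism of a virtually polycyclic group preserves the Hirsch length, so $\varphi(\Gamma^\prime)$ has the same Hirsch length as $\Gamma^\prime$ and thus has finite index in $\Gamma^\prime$. Consequently the first defining condition holds automatically, exactly as it does for $\varphi$ itself.

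For the finiteness of the intersection, the key observation is the monotonicity of images: from $\Gamma^\prime \subset \Gamma$ we obtain $\varphi^n(\Gamma^\prime) \subset \varphi^n(\Gamma)$ for every $n > 0$, and hence
$$\bigcap_{n>0}\varphi^n(\Gamma^\prime) \subset \bigcap_{n>0}\varphi^n(\Gamma).$$
The right-hand side is finite because $\varphi$ is strongly scale-invariant, so the left-hand side, being a subgroup of a finite group, is finite as well. With both conditions verified, $\restr{\varphi}{\Gamma^\prime}$ is strongly scale-invariant.

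I do not expect any genuine obstacle in this argument. The finite-index requirement is absorbed entirely by the Hirsch-length machinery available for virtually polycyclic groups, and the finiteness of the nested intersection is immediate once one notes the inclusion $\varphi^n(\Gamma^\prime) \subset \varphi^n(\Gamma)$. The only point worth stating explicitly is that a subgroup of a virtually polycyclic group is itself virtually polycyclic, so that the Hirsch-length criterion may legitimately be applied to $\Gamma^\prime$.
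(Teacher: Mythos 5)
Your proof is correct and coincides with what the paper intends: the paper states this lemma without proof, calling it obvious from Definition \ref{def:ssi}, and the two points you verify---the inclusion $\bigcap_{n>0}\varphi^n(\Gamma^\prime) \subset \bigcap_{n>0}\varphi^n(\Gamma)$ and the finite-index condition via the Hirsch length, the latter already recalled in the paper's preliminaries for virtually polycyclic groups---are exactly the intended justification. There is no gap; your explicit remark that subgroups of virtually polycyclic groups are again virtually polycyclic is the only detail the paper leaves implicit.
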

In particular, every injectively characteristic subgroup of a strongly scale-invariant virtually polycyclic group is itself strongly scale-invariant. In the special case of a finite index subgroup in a virtually polycyclic group, the converse of this lemma is true as well. In order to give a proof, we first recall the following fact about finitely generated groups, for which we present the argument for completeness.
\begin{Lem}
Let $\Gamma$ be a finitely generated group and $\varphi: \Gamma \to \Gamma$ an automorphism. For every finite index subgroup $\Gamma^\prime <\Gamma$, there exists a $k$ such that $\varphi^k(\Gamma^\prime) = \Gamma^\prime$.
\end{Lem}
\begin{proof}
Since $\Gamma$ is finitely generated, it only has a finite number of subgroups of index $[\Gamma:\Gamma^\prime]$. For every subgroup $H < \Gamma$ of index $[\Gamma:\Gamma^\prime]$, the image $\varphi(H)$ is again a subgroup of the same index. Therefore $\varphi$ permutes the finite set of subgroups of given index $[\Gamma:\Gamma^\prime]$, showing that some power $\varphi^k$ maps $\Gamma^\prime$ onto itself.
\end{proof}

We are now ready to prove the converse of Lemma \ref{lem:restrict} for subgroups of finite index in virtually polycyclic groups. The assumption on the group $\Gamma$ is used in order to have that every subgroup is finitely generated.

\begin{Lem}
\label{lem:subgroup}
Let $\Gamma$ be a virtually polycyclic group, $\varphi: \Gamma \to \Gamma$ a monomorphism and $\Gamma^\prime < \Gamma$ a finite index subgroup which is $\varphi$-invariant. The monomorphism $\varphi$ is strongly scale-invariant if and only if $\restr{\varphi}{\Gamma^\prime}$ is strongly scale-invariant. 
\end{Lem}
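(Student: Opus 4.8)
The plan is to observe that one implication is already in hand: if $\varphi$ is strongly scale-invariant then, since $\Gamma^\prime$ is $\varphi$-invariant, Lemma~\ref{lem:restrict} immediately gives that $\restr{\varphi}{\Gamma^\prime}$ is strongly scale-invariant. So the real content is the converse, and for this I would work through the characterization of Lemma~\ref{lem:defwithH}. Recall that the finite-index condition is automatic here because $\Gamma$ is virtually polycyclic. Thus I only need to show: assuming $\restr{\varphi}{\Gamma^\prime}$ is strongly scale-invariant, every subgroup $H < \Gamma$ with $\varphi(H) = H$ is finite. So fix such an $H$.

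The key idea is to pass to $H \cap \Gamma^\prime$, which has finite index in $H$ because $\Gamma^\prime$ has finite index in $\Gamma$. Since $\varphi(H) = H$ and $\varphi$ is injective, the restriction $\restr{\varphi}{H}$ is an automorphism of $H$, and $H$ is finitely generated as a subgroup of a virtually polycyclic group (this is exactly where the hypothesis on $\Gamma$ enters). Applying the preceding lemma on finitely generated groups to the automorphism $\restr{\varphi}{H}$ and the finite-index subgroup $H \cap \Gamma^\prime$, I obtain an integer $k > 0$ with $\varphi^k(H \cap \Gamma^\prime) = H \cap \Gamma^\prime$.

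Now $H \cap \Gamma^\prime$ is a subgroup of $\Gamma^\prime$ that is mapped \emph{onto} itself by $\varphi^k$. Using Lemma~\ref{lem:iterate}, the strong scale-invariance of $\restr{\varphi}{\Gamma^\prime}$ propagates to $\restr{\varphi^k}{\Gamma^\prime} = (\restr{\varphi}{\Gamma^\prime})^k$, and then Lemma~\ref{lem:defwithH} applied to this iterate forces $H \cap \Gamma^\prime$ to be finite. Since $H \cap \Gamma^\prime$ has finite index in $H$, the group $H$ is finite, and one more application of Lemma~\ref{lem:defwithH} (now for $\varphi$ on $\Gamma$) concludes that $\varphi$ is strongly scale-invariant.

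The step I expect to be the main obstacle is the mismatch between \emph{invariance} and \emph{surjectivity}: a priori one only knows $\varphi(H \cap \Gamma^\prime) \subseteq H \cap \Gamma^\prime$, whereas Lemma~\ref{lem:defwithH} requires a subgroup mapped exactly onto itself. Resolving this is precisely the purpose of the finitely-generated-automorphism lemma, which upgrades the inclusion to an equality after passing to a suitable power $\varphi^k$; combining this with Lemma~\ref{lem:iterate} is what makes the whole argument go through.
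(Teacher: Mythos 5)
Your proposal is correct and follows essentially the same route as the paper's proof: both implications are handled identically, with Lemma~\ref{lem:restrict} for the forward direction and, for the converse, the intersection $H \cap \Gamma^\prime$ combined with the preceding lemma on automorphisms of finitely generated groups to upgrade $\varphi$-invariance to the equality $\varphi^k(H \cap \Gamma^\prime) = H \cap \Gamma^\prime$, then Lemmas~\ref{lem:iterate} and~\ref{lem:defwithH}. You merely make explicit the appeals to Lemma~\ref{lem:iterate} and Lemma~\ref{lem:defwithH} that the paper leaves implicit.
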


\begin{proof}
Since we are working in virtually polycyclic groups, the condition that the image has finite index is always satisfied. As mentioned before, one implication holds in general and is given by Lemma \ref{lem:restrict}. For the other implication, assume that $\restr{\varphi}{\Gamma^\prime}$ is strongly scale-invariant and assume that $H < \Gamma$ is a subgroup for which $\varphi(H) = H$. Since $\Gamma$ is virtually polycyclic, the subgroup $H$ is finitely generated. The intersection $H \cap \Gamma^\prime$ is a finite index subgroup of $H$, and hence some iterate $\varphi^k$ of $\varphi$ satisfies $\varphi^k(H \cap \Gamma^\prime) = H \cap \Gamma^\prime$. In particular, $H \cap \Gamma^\prime$ is finite because $\restr{\varphi^k}{\Gamma^\prime}$ is strongly scale-invariant and thus $H$ is finite as well.
\end{proof}

This lemma seems to imply that if $\Gamma$ contains a strongly scale-invariant subgroup of finite index, then the group itself is strongly scale-invariant. This is true by the main result in this paper, but it does not follow immediately from Lemma \ref{lem:subgroup}, since not every group morphism of a finite index subgroup can be extended to the entire group.

\subsection{Strongly scale-invariant virtually polycyclic groups}
\label{sec:maintheorem}

In this second part, we study strongly scale-invariant monomorphisms of virtually polycyclic groups via their extensions to the $\Q$-algebraic hull. We first shift our attention to nilpotent groups, for which we have that the rational points of the $\Q$-algebraic hull are equal to the  rational Mal'cev completion. In this part, $N$ will always be a torsion-free nilpotent group.

As explained before, every monomorphism $\varphi: N \to N$ of a torsion-free nilpotent group $N$ uniquely extends to an automorphism $\phi: N^\Q \to N^\Q$ on the rational Mal'cev completion $N^\Q$, where the eigenvalues of $\varphi$ and $\phi$ are identical, and checking whether $\varphi$ is strongly scale-invariant depends on its eigenvalues.
\begin{Prop}
	\label{prop:eigenvalue1}
	Let $N^\Q$ be a radicable torsion-free nilpotent group and $\phi: N^\Q \to N^\Q$ an automorphism.
	\begin{itemize}
		\item If $1$ is an eigenvalue of $\phi$, then there exists $x \in N^\Q$ such that $\phi(x) = x$.
		\item If $1$ is not an eigenvalue of $\phi$, then for every $x \in N^\Q$, there exists a unique $y$ in $N^\Q$ with $x = y \phi(y)^{-1}$.
	\end{itemize}
\end{Prop}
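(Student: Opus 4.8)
The plan is to transport everything to the Lie algebra $\lien^\Q$ via the exponential map, which in this radicable setting is a bijection satisfying $\phi\circ\exp = \exp\circ\psi$, where $\psi \in \Aut(\lien^\Q)$ is the automorphism corresponding to $\phi$ and whose eigenvalues are, by definition, the eigenvalues of $\phi$. For the first item, note that the characteristic polynomial of $\psi$ has rational coefficients, so if $1$ is a root then $\psi - \mathrm{id}$ is already singular over $\Q$ and there is a nonzero $X \in \lien^\Q$ with $\psi(X) = X$. Setting $x = \exp(X)$ gives a nontrivial element with $\phi(x) = \exp(\psi(X)) = \exp(X) = x$. Conversely, if $\phi(x) = x$ with $x = \exp(X) \neq e$, then injectivity of $\exp$ forces $\psi(X) = X$ and $X \neq 0$; hence $\phi$ has a nontrivial fixed point if and only if $1$ is an eigenvalue.

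For the second item, I first treat uniqueness. Consider $F\colon N^\Q \to N^\Q$ defined by $F(y) = y\phi(y)^{-1}$. If $F(y_1) = F(y_2)$, rearranging gives $y_2^{-1}y_1 = \phi(y_2)^{-1}\phi(y_1) = \phi(y_2^{-1}y_1)$, so $z = y_2^{-1}y_1$ is fixed by $\phi$. Since $1$ is not an eigenvalue, the equivalence established above forces $z = e$, i.e.\ $y_1 = y_2$, so $F$ is injective.

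Existence, i.e.\ surjectivity of $F$, I would prove by induction on the nilpotency class $c$ of $N^\Q$. When $c = 1$ the group is a $\Q$-vector space and $F = \mathrm{id} - \phi$ in additive notation; as $1$ is not an eigenvalue, $\mathrm{id} - \phi$ is invertible and $F$ is onto. For the inductive step, let $Z = \gamma_c(N^\Q)$, a central $\phi$-invariant subgroup which is a $\Q$-vector space, and let $\pi\colon N^\Q \to \bar N = \faktor{N^\Q}{Z}$. The induced automorphism $\bar\phi$ has nilpotency class $c-1$, and since $\gamma_c(\lien^\Q)$ is $\psi$-invariant the characteristic polynomial of $\psi$ factors through this subalgebra and the quotient; hence the eigenvalues of both $\bar\phi$ and $\phi|_Z$ lie among those of $\phi$, and in particular $1$ is not an eigenvalue of either. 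By induction there is $\bar y$ with $\pi(x) = \bar y\,\bar\phi(\bar y)^{-1}$, and lifting to $y_0 \in N^\Q$ yields $y_0\phi(y_0)^{-1} = x z_0$ for some $z_0 \in Z$. Replacing $y_0$ by $y_0 z$ with $z \in Z$ and using centrality gives $F(y_0 z) = x z_0\,(z\phi(z)^{-1})$, so it remains to solve $z\phi(z)^{-1} = z_0^{-1}$ inside $Z$. As $1$ is not an eigenvalue of $\phi|_Z$, the map $\mathrm{id} - \phi|_Z$ is an invertible $\Q$-linear map on $Z$, so the required (unique) $z$ exists and $y = y_0 z$ is the desired solution.

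The main obstacle is the surjectivity statement: the first item and injectivity reduce quickly to the fixed-vector computation, but surjectivity genuinely exploits the nilpotent structure. The crucial point making the induction run is that the eigenvalue hypothesis passes simultaneously to the central term $Z = \gamma_c(N^\Q)$ and to the quotient $\bar N$, so that both the central linear correction $\mathrm{id} - \phi|_Z$ and the inductive solution in $\bar N$ are available; keeping careful track that every construction stays over $\Q$, rather than invoking complex eigenvectors, is what makes the argument go through.
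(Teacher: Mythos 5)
Your proof is correct, and while it shares the paper's overall strategy (transporting the eigenvalue condition through the exponential map and inducting on the nilpotency class), the induction is organized dually. The paper quotients by the commutator subgroup $[N^\Q,N^\Q]$, solves the problem in the abelianization first, and then applies the induction hypothesis to the subgroup $[N^\Q,N^\Q]$, which has smaller class; you instead quotient by the central subgroup $Z = \gamma_c(N^\Q)$, apply the induction hypothesis to the quotient of class $c-1$, and finish with a single linear correction inside $Z$, where centrality lets the error term $z\phi(z)^{-1}$ commute past $\phi(y_0)^{-1}$. Both decompositions rest on the same fact, which you make explicit and the paper leaves implicit: eigenvalues of the restriction to an invariant term of the lower central series and of the induced map on the quotient are among the eigenvalues of $\phi$. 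You are also more careful than the paper in two places. For the first item you justify the existence of a \emph{rational} eigenvector (the characteristic polynomial of $\psi$ is rational, so $\psi - \mathrm{id}$ is already singular over $\Q$), which the paper takes for granted; and for uniqueness you give a clean global argument --- $F(y) = y\phi(y)^{-1}$ is injective because $F(y_1) = F(y_2)$ produces the fixed point $y_2^{-1}y_1$, excluded by the first item --- whereas the paper only says to ``trace back the argument in reverse order.'' Your route costs you the (standard, and implicitly used) facts that $\gamma_c(N^\Q)$ is a central $\Q$-vector subspace with radicable torsion-free quotient, but these are no heavier than the corresponding facts the paper needs for $[N^\Q,N^\Q]$, and in exchange you get tighter bookkeeping: one central linear equation per inductive step rather than re-running the abelian case in the abelianization.
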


\begin{proof}
Write $\psi$ for the Lie algebra automorphism corresponding to $\phi$. For the first statement, if $0 \neq X \in \lien^\Q$ is an eigenvector for eigenvalue $1$, then $x = \exp(X) \in N^\Q$ will satisfy the condition, because $\phi(x) = \phi(\exp(X)) = \exp(\psi(X)) = \exp(X) = x$.

We prove the second statement by induction on the nilpotency class of $N^\Q$. If $N^\Q$ is abelian, it is isomorphic to $\Q^m$ for some $m > 0$ and $\psi =\phi$. Hence this follows directly from the fact that $1$ is not an eigenvalue of $\phi$ if and only if the map $(I - \phi)$ is invertible. This implies as well that the element $y$ is unique.  

Now assume that $c > 1$ and take any $x \in N^\Q$. By taking the quotient in $\faktor{N^\Q}{[N^\Q,N^\Q]}$, we know there exists a unique $y_1$ with $y_1 \phi(y_1)^{-1} [N^\Q,N^\Q] = x [N^\Q,N^\Q]$ from the abelian case. This means that $y_1^{-1} x \phi(y_1) \in [N^\Q,N^\Q]$. Applying the induction hypothesis on this element, we get that there exists $y_2 \in [N^\Q,N^\Q]$ with $y_1^{-1} x \phi(y_1)  = y_2 \phi(y_2)^{-1}$. The element $y_1 y_2$ satisfies the conditions of the proposition. For uniqueness, we trace back the argument in reverse order.
\end{proof}

\begin{Cor}
	\label{cor:discoeig}
Let $N$ be a torsion-free nilpotent group. If $\varphi: N \to N$ is a strongly scale-invariant monomorphism, then $1$ is not an eigenvalue of $\varphi$.
\end{Cor}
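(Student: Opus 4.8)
The plan is to establish the contrapositive: if $1$ \emph{is} an eigenvalue of $\varphi$, then $\varphi$ is not strongly scale-invariant. The natural tool is Lemma \ref{lem:defwithH}, which says that a monomorphism with image of finite index is strongly scale-invariant if and only if every subgroup $H < N$ satisfying $\varphi(H) = H$ is finite. Since $N$ is torsion-free, any infinite such $H$ would contradict the strongly scale-invariant property. So it suffices to produce an \emph{infinite} subgroup $H < N$ that $\varphi$ maps onto itself, under the assumption that $1$ is an eigenvalue.

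First I would pass to the rational Mal'cev completion $N^\Q$ and the extended automorphism $\phi: N^\Q \to N^\Q$, recalling that the eigenvalues of $\varphi$ and $\phi$ coincide. The first bullet of Proposition \ref{prop:eigenvalue1} then gives a nontrivial fixed point: there exists $x \in N^\Q$ with $x \neq e$ and $\phi(x) = x$. This is the crucial input, since it converts the algebraic hypothesis (the eigenvalue $1$) into a concrete group-theoretic object, a fixed element in the radicable completion.

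Next I would descend from $N^\Q$ back to $N$. The element $x$ lies in $N^\Q$, not necessarily in $N$, but by the defining property of the Mal'cev completion some power $x^m$ lies in $N$ for a suitable $m > 0$; moreover $\phi(x^m) = \phi(x)^m = x^m$, so $x^m$ is a nontrivial fixed point of $\varphi$ inside $N$ itself. Now consider the cyclic subgroup $H = \langle x^m \rangle < N$. Because $N$ is torsion-free and $x^m \neq e$, the subgroup $H$ is infinite, and since $\varphi(x^m) = x^m$ we have $\varphi(H) = H$. This is exactly the infinite invariant subgroup forbidden by Lemma \ref{lem:defwithH}, contradicting that $\varphi$ is strongly scale-invariant.

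The main point requiring care, rather than a genuine obstacle, is the descent step: I must make sure the fixed element survives the passage from $N^\Q$ to $N$ and remains a genuine fixed point after taking a power, which is immediate because $\phi$ is a group homomorphism extending $\varphi$ and $\phi(x) = x$ forces $\phi(x^m) = x^m$. The one hypothesis I must keep track of is that $\varphi(N)$ has finite index in $N$, which is needed to invoke Lemma \ref{lem:defwithH}; for a torsion-free nilpotent group this is automatic by the Hirsch-length argument recorded in the preliminaries, since a monomorphism preserves the Hirsch length. With that in place the contrapositive is complete, and hence the corollary follows.
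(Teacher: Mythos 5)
Your proposal is correct and follows essentially the same route as the paper's own proof: both extend $\varphi$ to the automorphism $\phi$ of the rational Mal'cev completion $N^\Q$, invoke the first part of Proposition \ref{prop:eigenvalue1} to obtain a nontrivial fixed point $x$, use fullness of $N$ in $N^\Q$ to replace $x$ by a power $x^m \in N$, and conclude via the infinite $\varphi$-invariant cyclic subgroup $\langle x^m \rangle$, contradicting Lemma \ref{lem:defwithH}. Your added care about $x \neq e$ and about the finite-index hypothesis (automatic here, and in any case built into the definition of strongly scale-invariant) is harmless and merely makes explicit what the paper leaves implicit.
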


\begin{proof}
Assume for a contradiction that $\varphi$ has eigenvalue $1$. Consider the extension $\phi: N^\Q \to N^\Q$ to the rational Mal'cev closure of $N$, which is an automorphism of $N^\Q$. Take $x \in N^\Q$ with $\phi(x) = x$ as in Proposition \ref{prop:eigenvalue1}. There exists an integer $m > 0$ such that $x^m \in N$ since $N$ is a full subgroup of $N^\Q$. The subgroup $H$ generated by $x^m$ is infinite and satisfies $\varphi(H) = H$, which is a contradiction.
\end{proof}

The converse is not true, for example by considering the automorphism $\begin{pmatrix} 2 & 1 \\ 1 & 1\end{pmatrix}$ on $\Z^2$. This map does not have $1$ as eigenvalue, but it not strongly scale-invariant either as it an automorphism. The monomorphisms of nilpotent groups which are strongly scale-invariant can be characterized depending only on the eigenvalues, but this is not necessary for our purposes. We do need in the next section that if all eigenvalues of $\varphi$ are bigger than $1$ in absolute value, in which case we call $\varphi$ \emph{expanding}, that the map $\varphi$ is strongly scale-invariant.

\begin{Lem}
	\label{lem:expanding}
If $N$ is a finitely generated torsion-free nilpotent group and $\varphi: N \to N$ is expanding, then $\displaystyle \bigcap_{n > 0}\varphi^n(N)$ is trivial. In particular the monomorphism $\varphi$ is strongly scale-invariant.
\end{Lem}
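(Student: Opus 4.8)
The plan is to realize $N$ as a lattice in its real Mal'cev completion and then exploit the fact that expanding on the Lie algebra means contracting for the inverse map. First I would recall that a finitely generated torsion-free nilpotent group $N$ embeds as a discrete (cocompact) subgroup of the simply connected nilpotent Lie group $N^\R$ whose Lie algebra is $\lien^\R = \lien^\Q \otimes_\Q \R$, and that the monomorphism $\varphi$ extends to a Lie group automorphism $\phi : N^\R \to N^\R$ restricting to the already considered extension on $N^\Q$. On the Lie algebra level $\phi$ corresponds to $\psi \in \Aut(\lien^\R)$, and since $\exp : \lien^\R \to N^\R$ is a diffeomorphism satisfying $\phi \comp \exp = \exp \comp \psi$, we obtain $\phi^{-n}(\exp X) = \exp(\psi^{-n} X)$ for all $X \in \lien^\R$ and all $n > 0$.

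The key point is a linear-algebra observation. Since $\varphi$ is expanding, every eigenvalue of $\psi$ has absolute value $> 1$, so every eigenvalue of $\psi^{-1}$ has absolute value $< 1$; that is, $\psi^{-1}$ has spectral radius strictly less than one. Therefore $\psi^{-n} \to 0$ as $n \to \infty$, and by continuity of $\exp$ this gives $\phi^{-n}(g) \to e$ in $N^\R$ for every fixed $g \in N^\R$.

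Now I would take any $g \in \bigcap_{n>0} \varphi^n(N)$. For each $n$ there is a unique element $g_n \in N$ with $\varphi^n(g_n) = g$, uniqueness coming from the injectivity of $\varphi$; since $\varphi^n = \restr{\phi^n}{N}$ and $\phi^n$ is an automorphism of $N^\R$, this element is exactly $g_n = \phi^{-n}(g) \in N$. By the previous paragraph $g_n \to e$, and because $N$ is discrete in $N^\R$ there is a neighbourhood of $e$ meeting $N$ only in $e$; hence $g_n = e$ for all large $n$, and consequently $g = \phi^n(g_n) = e$. This proves that $\bigcap_{n>0}\varphi^n(N)$ is trivial. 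In particular it is finite, and since the image $\varphi(N)$ has finite index in $N$ (automatic for the finitely generated nilpotent, hence polycyclic, group $N$), the monomorphism $\varphi$ is strongly scale-invariant by Definition \ref{def:ssi}; alternatively this last step follows from Lemma \ref{lem:defwithH}, as any subgroup $H$ with $\varphi(H)=H$ lies in the trivial intersection.

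I expect the only delicate point to be the bookkeeping identifying the $\varphi$-preimages $g_n$ inside $N$ with the values $\phi^{-n}(g)$ computed in $N^\R$, together with the justification that $N$ sits discretely in $N^\R$. Once these are in place, the spectral-radius estimate makes the conclusion immediate, so there is no genuinely hard step.
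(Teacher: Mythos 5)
Your proof is correct, but it takes a genuinely different route from the paper's. The paper argues purely algebraically, by induction on the nilpotency class: the induced map on the torsion-free abelian quotient $\faktor{N}{\sqrt[N]{[N,N]}}$ is again expanding, which yields $\bigcap_{n>0}\varphi^n(N)\subset \sqrt[N]{[N,N]}$; since $\restr{\varphi}{\sqrt[N]{[N,N]}}$ is still expanding and of strictly smaller class, the induction hypothesis together with Lemma \ref{lem:defwithH} and torsion-freeness forces the intersection to be trivial. You instead globalize what is in effect the paper's abelian base case: you embed $N$ as a lattice in the real Mal'cev completion $N^\R$, transport the problem to $\lien^\R$ via the exponential diffeomorphism intertwining $\phi$ and $\psi$, and use that $\psi^{-1}$ has spectral radius strictly less than $1$ --- so $\psi^{-n}\to 0$ (by Gelfand's formula, valid even when $\psi^{-1}$ is not semisimple) and the unique preimages $g_n=\phi^{-n}(g)$ of any $g$ in the intersection converge to $e$, hence are eventually trivial by discreteness of $N$ in $N^\R$. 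The steps you flag as delicate are indeed the standard ingredients (Mal'cev embedding, uniqueness of the extension identifying $g_n$ with $\phi^{-n}(g)$, discreteness of the lattice), and your handling of them is sound; so is the closing step, since the finite index of $\varphi(N)$ is automatic via the Hirsch length, after which Definition \ref{def:ssi} (or Lemma \ref{lem:defwithH}) applies. What each approach buys: the paper's induction stays entirely in the rational/integral category, needs no real completion or analysis, and matches the inductive technique used elsewhere in the paper (compare Proposition \ref{prop:eigenvalue1}); your argument imports standard Lie theory but makes the dynamics transparent --- the inverse map is a contraction near the identity --- which is the classical mechanism behind triviality of $\bigcap_{n>0}\varphi^n(N)$ for expanding maps and would, for instance, adapt directly to expanding automorphisms of lattices in other settings.
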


\begin{proof}
We prove this via induction, where the abelian case is immediate by definition. For general nilpotent groups $N$, the quotient $\faktor{N}{\sqrt[N]{[N,N]}}$ is abelian and the induced map $\overline{\varphi}$ is again expanding. Hence $$\displaystyle \bigcap_{n > 0}\overline{\varphi}^n\left( \faktor{N}{\sqrt[N]{[N,N]}} \right) =\sqrt[N]{[N,N]}$$ or equivalently $\displaystyle \bigcap_{n > 0}\varphi^n(N) \subset \sqrt[N]{[N,N]}.$ Since $\sqrt[N]{[N,N]}$ is of nilpotency class strictly smaller than $N$ and the restriction of $\varphi$ to $\sqrt[N]{[N,N]}$ is still expanding, the induction hypothesis yields $\displaystyle \bigcap_{n > 0}\varphi^n(N) = \{e\}$ via Lemma \ref{lem:defwithH} and the fact that $N$ is torsion-free.
\end{proof}

%
%

For the proof, it will be crucial to consider the $\Q$-algebraic hull of a finite index subgroup. 

\begin{Lem}
	\label{lem:hullfi}
Let $\Gamma^\prime < \Gamma$ be a finite index subgroup of the virtually polycyclic group $\Gamma$ with $\Q$-algebraic hull $\Gamma^\Q$. The $\Q$-algebraic hull of $\Gamma^\prime$ is equal to the Zariski-closure of $\Gamma^\prime$ in $\Gamma^\Q$. In particular, if $\Gamma^\Q$ is connected, the $\Q$-algebraic hull of $\Gamma$ and $\Gamma^\prime$ coincide.
\end{Lem}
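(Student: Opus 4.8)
The plan is to set $G$ to be the Zariski-closure of $i(\Gamma^\prime)$ inside $\Gamma^\Q$ and to verify directly that $G$, together with the (injective) restriction $i|_{\Gamma^\prime}$, satisfies the three defining conditions of a $\Q$-algebraic hull from Definition \ref{def:hull}; the conclusion then follows from the uniqueness of the hull. Since $i(\Gamma^\prime) \subset i(\Gamma) \subset \Gamma^\Q(\Q)$ consists of $\Q$-rational points and $\Q$ is a perfect field, $G$ is an algebraic subgroup defined over $\Q$, and condition $(1)$ is immediate: $i(\Gamma^\prime)$ is Zariski-dense in $G$ by construction and lands in $G(\Q)$.

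The first real step is to show that $G$ has finite index in $\Gamma^\Q$. Writing $\Gamma$ as a finite union of cosets $\Gamma = \bigcup_{j} \gamma_j \Gamma^\prime$ and using that the Zariski-closure commutes with finite unions and with left translation by a fixed element, I obtain $\Gamma^\Q = \overline{i(\Gamma)} = \bigcup_j i(\gamma_j) G$, so that $[\Gamma^\Q : G] < \infty$. Consequently $G$ and $\Gamma^\Q$ share the same identity component, $G^\circ = (\Gamma^\Q)^\circ$. Because the unipotent radical of a linear algebraic group depends only on its identity component, this yields $U(G) = U(\Gamma^\Q)$.

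With this identification the remaining two conditions follow quickly. For condition $(2)$, finite index gives $h(\Gamma^\prime) = h(\Gamma)$, and hence $\dim U(G) = \dim U(\Gamma^\Q) = h(\Gamma) = h(\Gamma^\prime)$. For condition $(3)$, since $U(G) = U(\Gamma^\Q)$ and $Z(U(\Gamma^\Q)) \subset U(\Gamma^\Q) \subset G$, I compute $C_G(U(G)) = G \cap C_{\Gamma^\Q}(U(\Gamma^\Q)) = G \cap Z(U(\Gamma^\Q)) = Z(U(G))$, as required. Thus $G$ is a $\Q$-algebraic hull of $\Gamma^\prime$, and by uniqueness it is \emph{the} $\Q$-algebraic hull. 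The final assertion is then clear: if $\Gamma^\Q$ is connected, then $G$ contains $(\Gamma^\Q)^\circ = \Gamma^\Q$, forcing $G = \Gamma^\Q$.

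I expect the only genuinely delicate point to be the finite-index argument together with the fact that the unipotent radical is unchanged when passing to a finite-index algebraic subgroup; both rest on standard properties of the Zariski-closure and of identity components, but they are exactly where the hypotheses \emph{finite index} and \emph{linear algebraic group} are used. Everything else is a formal check against Definition \ref{def:hull}.
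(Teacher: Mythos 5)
Your proposal is correct and follows exactly the paper's route: take $G$ to be the Zariski-closure of $\Gamma^\prime$ in $\Gamma^\Q$, verify the three conditions of Definition \ref{def:hull} using that $G$ has finite index (hence $U(G) = U(\Gamma^\Q)$ and $\dim U(G) = h(\Gamma) = h(\Gamma^\prime)$), and conclude $G = \Gamma^\Q$ in the connected case since a closed finite-index subgroup is open. You merely spell out details the paper leaves implicit (the coset argument for finite index, the identity-component argument for the unipotent radical, and definedness of $G$ over $\Q$), all of which are correct.
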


\begin{proof}
Write $G$ for the Zariski-closure of $\Gamma^\prime$, then we check the three condition of Definition \ref{def:hull}. The first condition is immediate by definition of $G$. The group $G$ is a finite index subgroup of $\Gamma^\Q$, implying that the unipotent radical of this group is equal to $U(\Gamma^\Q)$, yielding the second condition. Also the third condition follows since $G$ is a subgroup of $\Gamma^\Q$ and $U(\Gamma^\Q) = U(G)$.

For the last statement, note that $G$ is a closed subgroup of finite index in $\Gamma^\Q$ and hence also open, showing that $G = \Gamma^\Q$.
\end{proof}
We are now ready for the main result of this section.
\begin{Thm}\label{thm:main}
	Let $\Gamma$ be a virtually polycyclic group which is strongly scale-invariant, then $\Gamma$ is virtually nilpotent.
\end{Thm}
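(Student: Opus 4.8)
The goal is to show that a strongly scale-invariant virtually polycyclic group $\Gamma$ must be virtually nilpotent. The obvious strategy, given the machinery developed, is to pass to the $\Q$-algebraic hull and extend the monomorphism $\varphi$ to an algebraic automorphism, then argue that if $\Gamma$ fails to be virtually nilpotent, some iterate of $\varphi$ must preserve an infinite subgroup, contradicting Lemma lem:defwithH. Let me think through what obstructions arise and how to organize this.

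First, I need to reduce to a case where the $\Q$-algebraic hull exists and is well-behaved.

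Let me plan the reduction steps carefully.

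**Step 1: Reduce to a torsion-free / hull-admitting situation.** By Theorem thm:existQ, a $\Q$-algebraic hull exists iff every finite normal subgroup is trivial. So I want to find an injectively characteristic (or at least $\varphi$-invariant) finite-index subgroup $\Gamma'$ with no nontrivial finite normal subgroup, on which $\varphi$ (or an iterate) restricts to a strongly scale-invariant monomorphism. By Lemma lem:subgroup and Lemma lem:iterate, it suffices to work with $\restr{\varphi^k}{\Gamma'}$ for a $\varphi$-invariant finite-index $\Gamma'$, and virtual nilpotency of $\Gamma'$ gives virtual nilpotency of $\Gamma$. The natural candidate is one of the characteristic subgroups $\Gamma^k$ (injectively characteristic, finite index), perhaps intersected with a torsion-free or poly-$\Z$ finite-index subgroup; I'd need that for suitable $k$ this kills all finite normal subgroups. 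This is the kind of standard-but-fiddly reduction I'd want to state cleanly.

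**Step 2: Extend to the hull and analyze the solvable algebraic structure.** Having a $\Q$-algebraic hull $\Gamma^\Q$ and the extended automorphism $\Phi: \Gamma^\Q \to \Gamma^\Q$, the Fitting subgroup is $N = \Gamma \cap U(\Gamma^\Q)$, and $\Gamma$ is virtually nilpotent precisely when it is nilpotent-by-finite, which at the level of the hull should correspond to $\Gamma^\Q$ being unipotent up to finite index, i.e. $\Gamma^\Q / U(\Gamma^\Q)$ being finite. So I assume for contradiction that the reductive quotient (the maximal torus part of the solvable hull) is nontrivial: the group $T = \Gamma^\Q / U(\Gamma^\Q)$ contains a positive-dimensional torus.

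**Step 3: Produce an invariant infinite subgroup from a torus eigenvalue.** This is where the heart of the argument lies and where I expect the main obstacle. The automorphism $\Phi$ acts on $U(\Gamma^\Q)$ and on the quotient torus $T$. Because $T$ is a $\Q$-torus and $\Phi$ acts on it by an algebraic automorphism of finite-dimensional character lattice, some iterate $\Phi^k$ acts trivially on the cocharacter/character data or fixes a subtorus; more precisely, the action of $\Phi$ on the abelianized quotient $\Gamma^\Q/[\Gamma^\Q,\Gamma^\Q]$ (or on the torus part) has eigenvalues that are algebraic integers, and I want to locate an eigenvalue equal to $1$ (or a root of unity, handled by passing to an iterate). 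The guiding analogue is Proposition prop:eigenvalue1 and Corollary cor:discoeig for the nilpotent case: eigenvalue $1$ forces a fixed point, hence an infinite $\Phi$-invariant subgroup, contradicting strong scale-invariance via Lemma lem:defwithH. The key claim I must establish is: if $\Gamma^\Q$ is not unipotent-by-finite, then after passing to an iterate, $\Phi$ fixes some infinite subgroup sitting in a semisimple (non-unipotent) part — because automorphisms of a torus defined over $\Q$ have determinant $\pm 1$ and the eigenvalues are units that are permuted, so a suitable power fixes a nontrivial element, yielding a $\Phi^k$-invariant infinite subgroup of the commensurated lattice. The hard part will be rigorously producing an \emph{infinite} fixed (or invariant) subgroup of $\Gamma$ itself, not merely of $\Gamma^\Q$: I must descend the algebraic fixed data back to the arithmetic lattice $\Gamma$, using Zariski-density of $\Gamma$ and the fact that invariant $\Q$-subgroups meet $\Gamma$ in finite-index subgroups of infinite order. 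Once such an infinite $H < \Gamma$ with $\varphi^k(H)=H$ is exhibited, Lemma lem:defwithH together with Lemma lem:iterate delivers the contradiction, completing the proof that $T$ must be finite and hence $\Gamma$ virtually nilpotent.
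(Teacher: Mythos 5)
Your overall skeleton (reduce via injectively characteristic finite\-/index subgroups, extend to the $\Q$-algebraic hull, derive a contradiction from an invariant infinite subgroup via Lemma \ref{lem:defwithH}) matches the paper, and Step 1 is essentially the paper's reduction (using $\Gamma^k$, Lemma \ref{lem:hullfi}, Theorem \ref{thm:existQ}, Lemmas \ref{lem:iterate} and \ref{lem:subgroup}). But Step 3, which you yourself flag as the heart, contains a genuine gap and one false claim. The false claim: an automorphism of a $\Q$-torus has determinant $\pm 1$ on the character lattice, but that does \emph{not} imply some power fixes a nontrivial element --- a hyperbolic matrix such as $\begin{pmatrix} 2 & 1 \\ 1 & 1 \end{pmatrix} \in \GL(2,\Z)$ has determinant $\pm 1$, all eigenvalues units, and no iterate fixes any nonzero vector. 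The paper instead invokes rigidity of tori (\cite[8.10]{bore91-1}) to conclude that some power of $\Phi$ induces the \emph{identity} on $\faktor{\Gamma^\Q}{U(\Gamma^\Q)}$; this finite-order statement is what your eigenvalue heuristic cannot deliver.

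The deeper gap is the descent. Even once $\Phi$ is trivial on the torus quotient, your plan (``invariant $\Q$-subgroups meet $\Gamma$ in finite-index subgroups of infinite order'') only yields subgroups $H$ with $\varphi(H) \subseteq H$, which is no contradiction: the Fitting subgroup $N = \Gamma \cap U(\Gamma^\Q)$ is already an infinite $\varphi$-invariant subgroup. Lemma \ref{lem:defwithH} requires $\varphi(H) = H$, i.e.\ a genuine fixed element or a subgroup mapped \emph{onto} itself. The paper manufactures this as follows: for $x \in \Gamma \setminus N$, triviality on $\faktor{\Gamma}{N}$ gives $\varphi(x) = xy$ with $y \in N$; since $\restr{\varphi}{N}$ is strongly scale-invariant (Lemma \ref{lem:restrict}), Corollary \ref{cor:discoeig} rules out eigenvalue $1$, so the second part of Proposition \ref{prop:eigenvalue1} solves the twisted equation $y = z\Phi(z)^{-1}$ with $z \in N^\Q$, producing the fixed element $xz$ of infinite order (here the reduction making $\faktor{\Gamma}{N}$ torsion-free is used). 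Since $z$ need not lie in $\Gamma$, one then passes to $\Gamma' = \langle \Gamma, z \rangle$, which contains $\Gamma$ with finite index, and transfers strong scale-invariance \emph{upward} to $\Gamma'$ via Lemma \ref{lem:subgroup} before applying Lemma \ref{lem:defwithH} to the subgroup generated by $xz$. This coboundary-solving step plus the finite-extension trick is exactly the missing content in your proposal; Zariski-density alone does not produce it.
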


\begin{proof}
	First we look for suitable finite index subgroups of $\Gamma$ that are injectively characteristic and thus strongly scale-invariant by Lemma \ref{lem:subgroup}. It suffices to show that these finite index subgroups are virtually nilpotent, so each time we will replace $\Gamma$ by the finite index subgroup to achieve stronger assumptions. Note that every virtually polyclic group has a subgroup of finite index which is torsion-free and polycyclic, see \cite[Lemma 4.6.]{ragh72-1}. By replacing $\Gamma$ by $\Gamma^k$ for some $k > 0$, we can thus assume that $\Gamma$ is polycyclic and torsion-free. In particular $\Gamma$ has no finite normal subgroup and by Theorem \ref{thm:existQ}, $\Gamma$ has a $\Q$-algebraic hull $\Gamma^\Q$, with connected component $\left(\Gamma^\Q\right)^0$ of the identity element. The subgroup $\Gamma \cap \left(\Gamma^\Q\right)^0$ has finite index in $\Gamma$, with $\Q$-algebraic hull $\left(\Gamma^\Q\right)^0$ by Lemma \ref{lem:hullfi}. By again replacing $\Gamma$ by $\Gamma^k$ for some $k > 0$, we can hence assume that $\Gamma^\Q$ is connected. Finally, a similar argument shows that we can assume that $\faktor{\Gamma}{N}$ is torsion-free, with $N$ the Fitting subgroup of $\Gamma$. 

Let $\varphi: \Gamma \to \Gamma$ be a strongly scale-invariant monomorphism, with $\Phi$ the algebraic extension of $\varphi$ to $\Gamma^\Q$. 
The group $\faktor{\Gamma^\Q}{U(\Gamma^\Q)}$ is a torus and hence by the rigidity of tori, see \cite[8.10]{bore91-1}, some power of $\Phi$ induces the identity map on  $\faktor{\Gamma^\Q}{U(\Gamma^\Q)}$. So by taking some iterate of $\varphi$, which is strongly scale-invariant by Lemma \ref{lem:iterate}, we can assume that $\Phi$ is the identity on $\faktor{\Gamma^\Q}{U(\Gamma^\Q)}$. We know that $N = \Gamma \cap U(\Gamma^\Q)$ is the Fitting subgroup of $\Gamma$, hence the quotient $\faktor{\Gamma}{N}$ can be considered as a subgroup of $\faktor{\Gamma^\Q}{U(\Gamma^\Q)}$. The induced map by $\varphi$ on $\faktor{\Gamma}{N}$ is then the restriction of the map induced by $\Phi$ on $\faktor{\Gamma^\Q}{U(\Gamma^\Q)}$. We conclude that $\varphi$ induces the trivial map on $\faktor{\Gamma}{N}$.

We will show that in fact $\Gamma = N$ and thus that $\Gamma$ is nilpotent. Note that the rational Mal'cev completion of $N$ can be considered as a subgroup of $U(\Gamma^\Q)$, and thus the extension of $\varphi$ to $N^\Q$ is given by the restriction of $\Phi$ to $N^\Q$. Assume that $x \in \Gamma \setminus N$, then we know that $\varphi(x) = x y$ for some $y \in N$. Since $\restr{\varphi}{N}$ is strongly scale-invariant by Lemma \ref{lem:restrict}, we know by Proposition \ref{prop:eigenvalue1} that there exists $z \in N^\Q \subset U(\Gamma^\Q)$ with $y = z \Phi(z)^{-1}$. Hence the element $x z \in \Gamma^\Q$ satisfies $$\Phi(x z) = \Phi(x) \Phi(z) = xy \Phi(z)= x z.$$ Consider the group $\Gamma^\prime$ generated by $\Gamma$ and $z$, which contains $\Gamma$ as a subgroup of finite index. The map $\Phi$ induces a monomorphism on $\Gamma^\prime$, which is strongly scale-invariant by Lemma \ref{lem:subgroup}. Since the element $x z$ has infinite order and $\Phi(xz) = xz$, we get a contradiction, showing that such an $x \in \Gamma \setminus N$ does not exist. This implies $\Gamma = N$ and thus the group $\Gamma$ is nilpotent. 
\end{proof}

\subsection{Finite Reidemeister number}
\label{sec:reidemeister} 
As an application of the methods, we study the Reidemeister number for iterates of monomorphisms on virtually polycyclic groups. If $G$ is a group and $\varphi: G \to G$ is an endomorphism, we say that $x, y \in G$ are $\varphi$-conjugate if there exists $z \in G$ with $x = z y \varphi(z)^{-1}$. This is an equivalence relation on the elements of the group and we denote by $[x]_\varphi$ the equivalence class of $x \in G$. The Reidemeister number $R(\varphi) \in \N \cup \{\infty\}$ is defined as the number of equivalence classes, so $$R(\varphi) = \# \{[x]_\varphi \mid x \in G \}.$$ 
The Reidemeister number is an important invariant of endomorphisms, related to the study of fixed points as described in \cite{fh94-1}. For an endomorphism $\varphi: \Z^m \to \Z^m$ the Reidemeister number is finite if and only if $\varphi$ does not have eigenvalue $1$. Starting from an endomorphism $\varphi: G \to G$ with $R(\varphi^n) < \infty$ for all iterates $\varphi^n$, one can construct the Reidemeister zeta function as $$R_\varphi(z) = \exp{\left(\sum_{n=1}^\infty \frac{R(\varphi^n)}{n} z^n\right)}.$$ An active research line is to investigate when the Reidemeister zeta function is rational, see \cite{fh94-1}.

Recall the following lemma about the Reidemeister number, see \cite[Lemma 1.1.]{gw09-1}, where we only use the second part of the original statement for the special case of a finite group $G$.

\begin{Lem}
	\label{lem:daci}Let $\varphi: \Gamma \to \Gamma$ be an endomorphism of a group $\Gamma$ with $\varphi$-invariant normal subgroup $N$ and quotient group $G = \faktor{\Gamma}{N}$. Consider the following commutative diagram
	\[
\xymatrix{ 1 \ar[r] & N \ar[r]\ar[d]_{\varphi_N} & \Gamma \ar[r]\ar[d]_{\varphi} &
	G \ar[r] \ar[d]_{\overline{\varphi}} &1\\
	1 \ar[r] & N \ar[r] & \Gamma\ar[r] &G \ar[r] & 1 } \]
where $\overline{\varphi}$ and $\varphi_N$ are the induced and restricted endomorphism, respectively. The following statements about the Reidemeister number hold.
\begin{enumerate}
	\item  If $R(\varphi) < \infty$, then $R(\overline{\varphi}) < \infty$ as well.
	\item If $G$ is a finite group, then $R(\varphi) < \infty$ implies $R(\varphi_N) < \infty$.
\end{enumerate} 
\end{Lem}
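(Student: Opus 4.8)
The plan is to treat the two parts separately: the first is a direct push-forward of conjugacy classes along the quotient map, while the second requires a fibering argument that genuinely exploits the finiteness of $G$. For part $(1)$, I would first record that the quotient map $p : \Gamma \to G$ intertwines $\varphi$ and $\overline{\varphi}$, i.e.~$p \circ \varphi = \overline{\varphi} \circ p$. Applying $p$ to the defining relation $x = z y \varphi(z)^{-1}$ of $\varphi$-conjugacy gives $p(x) = p(z)\, p(y)\, \overline{\varphi}(p(z))^{-1}$, so $p$ carries each $\varphi$-conjugacy class into a single $\overline{\varphi}$-conjugacy class. Since $p$ is surjective, the induced map on classes is surjective as well, whence $R(\overline{\varphi}) \le R(\varphi)$ and the claim follows. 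I expect no obstacle here.

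For part $(2)$, the strategy is to fiber the set of $\varphi_N$-conjugacy classes of $N$ over the set of $\varphi$-conjugacy classes of $\Gamma$ via the inclusion $N \hookrightarrow \Gamma$, using that two elements of $N$ which are $\varphi_N$-conjugate are certainly $\varphi$-conjugate in $\Gamma$. As $R(\varphi) < \infty$, the image of this map is finite, so it suffices to bound the size of each fiber: fixing $n_0 \in N$, I must show that the elements $n \in N$ admitting a presentation $n = z n_0 \varphi(z)^{-1}$ with $z \in \Gamma$ fall into only finitely many $\varphi_N$-classes. The key reduction is to decompose the twisting element as $z = z_0 m$ with $m \in N$, where $z_0$ ranges over a fixed system of coset representatives of $N$ in $\Gamma$ — and here I use that $G$ is finite, so there are only finitely many such $z_0$. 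Absorbing $m$ then rewrites $n = z_0 w \varphi(z_0)^{-1}$ with $w = m n_0 \varphi(m)^{-1}$ itself $\varphi_N$-conjugate to $n_0$ inside $N$.

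The heart of the argument, and the step I expect to be the main obstacle, is controlling the outer twist by $z_0 \notin N$. I would introduce the map $\Phi_{z_0} : N \to N$, $\Phi_{z_0}(w) = z_0 w \varphi(z_0)^{-1}$, which is well defined into $N$ exactly for the finitely many cosets with $z_0 \varphi(z_0)^{-1} \in N$, and which is a bijection of $N$ since it is the composite of the inner automorphism $c_{z_0}$ with a right translation. A short computation — starting from a $\varphi_N$-conjugacy $w_1 = v w_2 \varphi(v)^{-1}$, conjugating through by $z_0$, and using $\varphi(z_0 v z_0^{-1}) = \varphi(z_0)\varphi(v)\varphi(z_0)^{-1}$ — shows that $\Phi_{z_0}$ sends $\varphi_N$-classes to $\varphi_N$-classes, with $v' = z_0 v z_0^{-1}$ serving as the new twisting element. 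Consequently $[n]_{\varphi_N} = \Phi_{z_0}\big([n_0]_{\varphi_N}\big)$ depends only on the coset of $z_0$, so each fiber meets at most $|G|$ distinct $\varphi_N$-classes, giving $R(\varphi_N) \le |G| \cdot R(\varphi) < \infty$. The subtlety to watch throughout is precisely that the twisting element leaves $N$, so neither $\Phi_{z_0}$ nor the relation $n = z_0 w \varphi(z_0)^{-1}$ is internal to $N$; the finiteness of $G$ is exactly what bounds the number of these external twists and closes the argument.
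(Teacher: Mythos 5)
Your argument is correct in both parts, and the delicate points are all handled: the condition $z_0\varphi(z_0)^{-1}\in N$ does depend only on the coset $z_0N$ (its image in $G$ is $p(z_0)\overline{\varphi}(p(z_0))^{-1}$), the map $\Phi_{z_0}$ carries $\varphi_N$-classes to $\varphi_N$-classes via the new twisting element $v'=z_0vz_0^{-1}$, which lies in $N$ by normality and has $\varphi(v')\in N$ by $\varphi$-invariance, and the fiber bound then yields $R(\varphi_N)\le |G|\cdot R(\varphi)<\infty$. There is, however, no in-paper proof to compare against: the paper quotes this lemma directly from \cite[Lemma 1.1.]{gw09-1}, remarking that only the second part, specialized to finite $G$, is needed. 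Your proof is essentially the standard argument behind that cited result --- part $(1)$ is the push-forward of twisted conjugacy classes along $p:\Gamma\to G$, and part $(2)$ is the coset-twisting estimate --- so you have in effect supplied the missing proof, and with an explicit quantitative bound that is slightly more than the paper uses. Two cosmetic remarks: you never need $\Phi_{z_0}$ to be a bijection, only that it sends the single class $[n_0]_{\varphi_N}$ into a single class; and the cosets with $z_0\varphi(z_0)^{-1}\notin N$ indeed contribute nothing to a fiber, since then $z_0w\varphi(z_0)^{-1}=(z_0wz_0^{-1})\bigl(z_0\varphi(z_0)^{-1}\bigr)\notin N$ for every $w\in N$. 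Note also that the original statement in \cite{gw09-1} is more general than the finite-$G$ case you prove, but the finite case is exactly what the paper invokes.
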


As another application of our methods, we now show that a finite Reidemeister number for all iterates $\varphi^n$ implies that the group $\Gamma$ is virtually nilpotent.
\begin{Thm}\label{cor:felshtyn}
	Let $\varphi: \Gamma \to \Gamma$ be a monomorphism of a virtually polycyclic group. If $R(\varphi^n) < \infty$ for all integers $n > 0$ the group $\Gamma$ must be virtually nilpotent. In particular, the Reidemeister zeta function $R_\varphi(z)$ of $\varphi$ is rational if the group $\Gamma$ is additionally torsion-free.
	\end{Thm}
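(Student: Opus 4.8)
The plan is to derive the first statement directly from Theorem \ref{thm:main} by rerunning its reductions while tracking Reidemeister numbers, and then to obtain rationality from an explicit product formula for $R(\varphi^n)$ together with a sign analysis.

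First I would check that the hypothesis $R(\varphi^n) < \infty$ for all $n$ is inherited by the finite index subgroups appearing in the proof of Theorem \ref{thm:main}. Each reduction there replaces $\Gamma$ by a normal, finite index, $\varphi$-invariant subgroup $\Gamma^\prime$ (namely some $\Gamma^k$ or $\Gamma \cap (\Gamma^\Q)^0$); such a $\Gamma^\prime$ is $\varphi^n$-invariant for every $n$ and has finite quotient, so part (2) of Lemma \ref{lem:daci} applied to $\varphi^n$ yields $R\big((\restr{\varphi}{\Gamma^\prime})^n\big) = R\big(\restr{\varphi^n}{\Gamma^\prime}\big) < \infty$ for all $n$. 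After the reductions I may therefore assume $\Gamma$ is torsion-free polycyclic with connected $\Q$-algebraic hull $\Gamma^\Q$ and with $\faktor{\Gamma}{N}$ torsion-free, where $N = \Gamma \cap U(\Gamma^\Q)$ is the Fitting subgroup, still subject to $R(\varphi^n) < \infty$ for all $n$.

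I would then reproduce the torus argument. By rigidity of tori some iterate $\Phi^n$ restricts to the identity on $\faktor{\Gamma^\Q}{U(\Gamma^\Q)}$, so $\varphi^n$ induces the identity on the torsion-free abelian quotient $\faktor{\Gamma}{N}$. If $\Gamma \neq N$ this quotient is free abelian of positive rank, and the identity map on $\Z^r$ has every element as its own twisted conjugacy class, hence infinite Reidemeister number; applying part (1) of Lemma \ref{lem:daci} to $1 \to N \to \Gamma \to \faktor{\Gamma}{N} \to 1$ then forces $R(\varphi^n) = \infty$, contradicting the hypothesis. Thus $\Gamma = N$ is nilpotent, and the original group is virtually nilpotent.

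For rationality, assume in addition that $\Gamma$ is torsion-free. Since eigenvalue $1$ of any $\varphi^n$ would already produce $R(\varphi^n) = \infty$ through part (1) of Lemma \ref{lem:daci} on an abelian factor of the lower central series, the finiteness hypothesis is equivalent to the statement that no eigenvalue of $\varphi$ is a root of unity. Working inside the Fitting subgroup and its Mal'cev completion, I would establish a product formula $R(\varphi^n) = \prod_i \lvert \det(I - \varphi_i^{\,n}) \rvert$, where $\varphi_i$ is the induced map on $\faktor{\sqrt[N]{\gamma_i(N)}}{\sqrt[N]{\gamma_{i+1}(N)}} \cong \Z^{m_i}$, the finite quotient $\faktor{\Gamma}{N}$ being absorbed through Lemma \ref{lem:daci} and the averaging formula familiar from \cite{fl13-2}. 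Writing $\det(I - \varphi_i^{\,n}) = \prod_j (1 - \lambda_{i,j}^{\,n})$, the absence of root-of-unity eigenvalues guarantees this determinant never vanishes and that its sign $\sg \det(I - \varphi_i^{\,n})$ is periodic in $n$; splitting the defining sum of $R_\varphi(z)$ over the residue classes of this period then expresses each piece through logarithms of factors $(1 - \mu_S z)$, with $\mu_S$ a product of eigenvalues, whose exponential is rational. The step I expect to be the main obstacle is exactly this product formula in the genuinely virtually nilpotent case, where the finite quotient contributes, together with the bookkeeping of the signs $\sg \det(I - \varphi_i^{\,n})$ needed to pass from the absolute values to a rational zeta function; the virtual nilpotency conclusion itself is only a mild variant of the proof of Theorem \ref{thm:main}.
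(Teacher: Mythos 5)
Your proof of the main statement is correct and coincides with the paper's own argument: the same reductions to a torsion-free polycyclic group with connected $\Q$-algebraic hull and torsion-free abelian quotient $\faktor{\Gamma}{N}$, with part (2) of Lemma \ref{lem:daci} transporting the hypothesis $R(\varphi^n)<\infty$ to each finite-index normal $\varphi$-invariant subgroup (the subgroups $\Gamma^k$ are injectively characteristic, and $\Gamma\cap(\Gamma^\Q)^0$ is normal and $\varphi$-invariant because the algebraic extension $\Phi$ preserves the identity component), followed by the rigidity-of-tori step from the proof of Theorem \ref{thm:main} and part (1) of Lemma \ref{lem:daci} applied to $1\to N\to\Gamma\to\faktor{\Gamma}{N}\to 1$. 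The paper phrases the contradiction as some iterate $\overline{\varphi}^n$ having eigenvalue $1$; your formulation that the identity on $\Z^r$ with $r>0$ has infinitely many twisted conjugacy classes is the same observation.

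For rationality, however, the paper proves nothing itself: it simply invokes \cite[Theorem 7.8.]{fl13-2}, which asserts rationality of the Reidemeister zeta function for monomorphisms of finitely generated torsion-free virtually nilpotent groups. Your sketch is an attempt to reprove that cited theorem, and it has the gap you yourself flag: the product formula $R(\varphi^n)=\prod_i \lvert\det(I-\varphi_i^{\,n})\rvert$ is false as stated for genuinely virtually nilpotent $\Gamma$ --- the correct statement is the Fel'shtyn--Lee averaging formula over the holonomy group, and Lemma \ref{lem:daci} cannot deliver it, since that lemma only converts \emph{finiteness} of Reidemeister numbers between the terms of an extension and never yields an equality or a count. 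Your sign analysis (once no eigenvalue of $\varphi$ is a root of unity, complex-conjugate pairs contribute positively and the sign of each $\det(I-\varphi_i^{\,n})$ is periodic in $n$ with period dividing $2$) is indeed the standard route in the purely nilpotent case, but carrying the holonomy twists through that computation is precisely the substance of \cite{fl13-2}, not bookkeeping. So either cite \cite[Theorem 7.8.]{fl13-2} as the paper does, in which case your argument is complete and essentially identical to the paper's, or be prepared to prove the averaging formula, which is a genuine theorem in its own right.
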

\begin{proof}
	Let $N$ be the Fitting subgroup of $\Gamma$ and write $G = \faktor{\Gamma}{N}$ for the quotient. By replacing $\Gamma$ by $\Gamma^k$ for some $k > 0$, as in the proof of Theorem \ref{thm:main}, we can assume that both $\Gamma$ and $G$ are torsion-free, that $G$ is abelian and that the $\Q$-algebraic hull of $\Gamma$ exists and is connected. Note that the restriction of $\varphi$ to this finite index normal subgroup still satisfies the conditions of the theorem by Lemma \ref{lem:daci}.
	
	Note that $N$ is injectively characteristic, since it is equal to $U(\Gamma^\Q) \cap \Gamma$. So for every $n > 0$ we can consider the following commuting diagram
	\[
	\xymatrix{ 1 \ar[r] & N \ar[r]\ar[d]_{\varphi^n_N} & \Gamma \ar[r]\ar[d]_{\varphi^n} &
		G \ar[r] \ar[d]_{\overline{\varphi}^n} &1\\
		1 \ar[r] & N \ar[r] & \Gamma\ar[r] &G \ar[r] & 1,} \]
	where $\varphi_N$ is the restriction of $\varphi$ to $N$ and $\overline{\varphi}$ is the induced map on the quotient $G$. Using Lemma \ref{lem:daci}, we know that $R(\overline{\varphi}^n) < \infty$ for all $n > 0$. Exactly as in the proof of Theorem \ref{thm:main}, we have that some iterate of $\varphi$ induces the identity map on $G$. Hence the group $G$ trivial, because otherwise some iterate $\overline{\varphi}^n$ would have eigenvalue $1$, leading to $R(\overline{\varphi}^n) = \infty$.
		
	The last statement follows from \cite[Theorem 7.8.]{fl13-2}, which shows that the Reidemeister zeta function is rational for monomorphisms on torsion-free virtually nilpotent groups.
\end{proof}

\section{Strongly scale-invariant virtually nilpotent groups}
\label{sec:virtuallynilpotent}

Although in the class of virtually polycyclic groups only the virtually nilpotent ones can be strongly scale-invariant, not every virtually nilpotent group does in fact have this property. Groups for which every monomorphism is automatically an automorphism are called co-Hopfian, and the first examples of co-Hopfian torsion-free nilpotent groups are constructed in \cite{bele03-1}. Note that a infinite co-Hopfian group can never be strongly scale-invariant.  A full characterization of the co-Hopfian torsion-free nilpotent groups was independently given in \cite{corn14-1,dd14-1}. In this section we characterize the strongly scale-invariant virtually nilpotent groups by adapting the methods of \cite{dd14-1}.

Let $\Gamma$ be a finitely generated virtually nilpotent group and $N \triangleleft \Gamma$ a normal subgroup of finite index which is both torsion-free and nilpotent. Such a subgroup $N$ always exists and, although it is not unique, the rational Mal'cev completion $N^\Q$ of $N$ does not depend on the choice of $N$. Indeed, if $N_1$ and $N_2$ are two different normal subgroups of finite index, then there intersection has $N_1 \cap N_2$ has finite index in both $N_1$ and $N_2$, which by Proposition \ref{prop:samemalcev} shows that the rational Mal'cev completions are isomorphic. We call the group $N^\Q$ the \emph{radicable nilpotent group associated to the virtually nilpotent group $\Gamma$}. Note that two virtually nilpotent groups $\Gamma_1$ and $\Gamma_2$ are abstractly commensurable if and only if the radicable nilpotent groups associated to $\Gamma_1$ and $\Gamma_2$ are isomorphic.

The main result of this section shows that being strongly scale-invariant only depends on the radicable nilpotent group $N^\Q$ associated to $\Gamma$.
\begin{Thm}
	\label{thm:virtuallynilpotent}
Let $\Gamma$ be a finitely generated virtually nilpotent group with associated radicable nilpotent group $N^\Q$. The group $\Gamma$ is strongly scale-invariant if and only if the Lie algebra corresponding to $N^\Q$ has a positive grading. Moreover, if $\Gamma$ is strongly scale-invariant, there exists a monomorphism $\varphi: \Gamma \to \Gamma$ such that $\displaystyle \bigcap_{n>0} \varphi^n(\Gamma)$ is the maximal finite normal subgroup of $\Gamma$.
\end{Thm}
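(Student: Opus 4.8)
The plan is to prove the two implications separately, in each case reducing to the torsion-free situation of \cite{dd14-1} and isolating the new work in the treatment of the torsion. Throughout, let $N$ denote the Fitting subgroup of $\Gamma$; it is normal of finite index, its torsion elements form a finite characteristic subgroup which coincides with the maximal finite normal subgroup $H$ of $\Gamma$, and $\faktor{N}{H}$ is torsion-free nilpotent. By Proposition \ref{prop:samemalcev} the rational Mal'cev completion of $\faktor{N}{H}$ is the group $N^\Q$ associated to $\Gamma$, so the existence of a positive grading on $\lien^\Q$ is a statement about the commensurability class only. I will also use that $\overline{\Gamma} = \faktor{\Gamma}{H}$ is almost-crystallographic and hence, by Theorem \ref{thm:existQ}, possesses a $\Q$-algebraic hull $\overline{\Gamma}^\Q$, inside which $\overline{\Gamma}$ is realised as an almost-crystallographic subgroup of $N^\R \rtimes \Aut(N^\R)$ with finite holonomy $F$ acting on $\lien^\Q$.

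For the implication ``strongly scale-invariant $\Rightarrow$ positive grading'' I would argue by descent. Since $N$ is injectively characteristic, $\restr{\varphi}{N}$ is strongly scale-invariant by Lemma \ref{lem:restrict}. I claim the induced map $\overline{\varphi}$ on the torsion-free group $\faktor{N}{H}$ is again strongly scale-invariant: by Lemma \ref{lem:defwithH} it suffices to show that any subgroup $\overline{K} < \faktor{N}{H}$ with $\overline{\varphi}(\overline{K}) = \overline{K}$ is finite. Its preimage $K \supseteq H$ then satisfies $\varphi(K) < K$ and $\varphi(K)H = K$, so that every $\varphi^n(K)H = K$ and the indices $[K : \varphi^n(K)]$ are bounded by $|H|$; since $\restr{\varphi}{K}$ is strongly scale-invariant (Lemma \ref{lem:restrict}), the decreasing chain $\varphi^n(K)$ stabilises on a subgroup mapped onto itself by $\varphi$, which Lemma \ref{lem:defwithH} forces to be finite, whence $K$ and $\overline{K}$ are finite. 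Thus $\faktor{N}{H}$ is a strongly scale-invariant torsion-free nilpotent group, and \cite{dd14-1} yields a positive grading on $\lien^\Q$.

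For the converse I would construct the monomorphism directly on $\overline{\Gamma}$ and then lift it. Starting from a positive grading, I first replace it by an $F$-invariant positive grading, which can be chosen as in \cite{dd14-1} for the finite group $F \subset \Aut(\lien^\Q)$; this produces, for each sufficiently divisible integer $\lambda > 1$, an expanding automorphism $\delta_\lambda$ of $N^\Q$ commuting with $F$. Viewing $\delta_\lambda$ inside $\Aut(N^\R)$, for suitable $\lambda$ (after an initial conjugation) it maps $\overline{\Gamma}$ into itself and restricts to a monomorphism $\overline{\varphi}$ of $\overline{\Gamma}$ whose restriction to the Fitting subgroup of $\overline{\Gamma}$ is expanding; Lemma \ref{lem:expanding} and Lemma \ref{lem:subgroup} then give that $\overline{\varphi}$ is strongly scale-invariant. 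To make $\bigcap_{n>0}\overline{\varphi}^n(\overline{\Gamma})$ trivial I would compose $\delta_\lambda$ with a translation, chosen by solving $x = y\,\overline{\varphi}(y)^{-1}$ as in the second part of Proposition \ref{prop:eigenvalue1} (legitimate since $\delta_\lambda$ has no eigenvalue $1$); this moves the affine fixed point away from the finite-order elements, so that no nontrivial finite subgroup is mapped onto itself, exactly the phenomenon that fails for the naive expanding map on $D_\infty$. It then remains to lift $\overline{\varphi}$ to a monomorphism $\varphi$ of $\Gamma$ with $\varphi|_H = \mathrm{id}$: any such lift satisfies $\pi \circ \varphi = \overline{\varphi}\circ \pi$, whence $\pi\big(\bigcap_n \varphi^n(\Gamma)\big) \subseteq \bigcap_n \overline{\varphi}^n(\overline{\Gamma}) = \{e\}$ and therefore $\bigcap_n \varphi^n(\Gamma) = H$, proving the moreover statement.

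The main obstacle is precisely this lift. Because $\Gamma$ has no $\Q$-algebraic hull when $H \neq 1$, the map $\varphi$ cannot be obtained as an algebraic automorphism of a hull of $\Gamma$, and one must instead lift the affine endomorphism $\overline{\varphi}$ through the finite extension $1 \to H \to \Gamma \to \overline{\Gamma} \to 1$. Since $H$ need not be central, the lift is governed by a genuine extension-theoretic obstruction; arranging $\overline{\varphi}$ to fix $F$ pointwise and to act trivially on $H$ is what I expect to make this obstruction vanish, and checking simultaneously that the translation of Proposition \ref{prop:eigenvalue1} can be chosen compatibly with the lift, so that the fixed finite subgroup is removed while the lift survives, is the most delicate point. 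A secondary obstacle is that $\overline{\Gamma}$ may itself carry torsion, so the affine construction underlying \cite{dd14-1}, phrased there for torsion-free (infra-nilmanifold) groups, has to be re-established for almost-crystallographic groups with nontrivial holonomy torsion.
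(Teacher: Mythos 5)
Your forward implication is essentially sound, but your converse stalls exactly where you say it does, and that gap is genuine. The lift of $\overline{\varphi}$ through $1 \to H \to \Gamma \to \overline{\Gamma} \to 1$ is never constructed: you only state that you ``expect'' the obstruction to vanish. The obstruction is real, not a formality. If $\varphi$ lifts $\overline{\varphi}$ and restricts to the identity on $H$, then applying $\varphi$ to $\gamma h \gamma^{-1} \in H$ shows $\varphi(\gamma)^{-1}\gamma \in C_\Gamma(H)$ for \emph{every} $\gamma \in \Gamma$, a strong compatibility condition between $\overline{\varphi}$ and the (possibly nonabelian, noncentral) conjugation action of $\Gamma$ on $H$; nothing in your construction of $\overline{\varphi}$ guarantees it. Your ``secondary obstacle'' -- re-establishing the expanding-automorphism construction of \cite{dd14-1} for almost-crystallographic groups with torsion -- is likewise deferred rather than done. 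The paper's proof is structured precisely to avoid the lifting problem: Proposition \ref{prop:semidirect} embeds the \emph{whole} group $\Gamma$, torsion included, into $N^\Q \rtimes_\rho F$ with $\rho: F \to \Aut(N^\Q)$ allowed to be non-injective, and identifies the maximal finite normal subgroup with $\Gamma \cap \ker(\rho)$ via Lemma \ref{lem:central}. Since $\ker(\rho)$ centralizes $N^\Q$ inside the semidirect product, the expanding automorphism of Proposition \ref{prop:autoonsemi}, acting as $(x,f) \mapsto (\phi_p^k(x),f)$, restricts to a monomorphism of $\Gamma$ directly -- no extension problem ever arises -- and the conjugation of Lemma \ref{lem:conjugate} (the correct analogue of your proposed translation) arranges $\Gamma \cap F = \Gamma \cap \ker(\rho) = H$, yielding $\bigcap_{n>0} \varphi^n(\Gamma) = H$. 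Until you either prove your lift exists or adopt such a whole-group embedding, the converse direction and the ``moreover'' statement are unproved.

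A separate, smaller error sits in your setup: the torsion subgroup of the Fitting subgroup need not coincide with the maximal finite normal subgroup $H$ of $\Gamma$. For $\Gamma = \Z \times S_3$ one has $H = \{0\} \times S_3$, while the Fitting subgroup is $\Z \times A_3$ with torsion $A_3 \cong \Z_3$; in particular $H$ is not even contained in the Fitting subgroup, so $\faktor{N}{H}$ need not make sense. Your descent argument itself is fine once you replace $H$ by $T(N)$, the torsion subgroup of the Fitting subgroup $N$ (which is finite, characteristic in $N$, hence $\varphi$-invariant): the bounded-index chain $\varphi^n(K)$ stabilizes, Lemma \ref{lem:defwithH} applied to $\restr{\varphi}{N}$ forces finiteness, and $\faktor{N}{T(N)}$ is torsion-free with Mal'cev completion $N^\Q$ by Proposition \ref{prop:samemalcev}. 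With that repair your forward direction matches the paper's, which simply restricts $\varphi$ to a torsion-free injectively characteristic finite-index subgroup and invokes \cite[Theorem 1.11.]{corn14-1}.
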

\noindent The maximal finite normal subgroup $H$ of a virtually polycyclic group $\Gamma$ is preserved by every monomorphism $\varphi: \Gamma \to \Gamma$ by \cite[Proposition 2.7.]{corn15-1}, meaning that $\varphi(H) = H$. Hence the last part of the theorem shows that there exists a monomorphism realizing the smallest possible finite subgroup as the intersection of the images.

One part of this theorem, namely that $\Gamma$ being strongly scale-invariant implies that the Lie algebra corresponding to $N^\Q$ has a positive grading will follow directly from \cite{corn14-1}. The proof of the other implication consists of three different steps. In the first step, we show that the group $\Gamma$ can be embedded in a semi-direct product $N^\Q \rtimes_\rho F$ with $F$ finite and $\rho: F \to \Aut(N^\Q)$ a morphism which is not necessarily injective. The second step constructs automorphisms of the semi-direct product $N^\Q \rtimes_\rho F$ which induce strongly scale-invariant monomorphisms on $\Gamma$, similarly as in \cite{dd14-1}. In the final step we show that taking a certain conjugate subgroup of $\Gamma$ in $N^\Q \rtimes_\rho F$ yields the stronger last property of the theorem.

\subsection{Semi-direct product}First we show how to embed $\Gamma$ in a semi-direct product following the methods of \cite{deki96-1}. In the semi-direct $N^\Q \rtimes_\rho F$ we denote by $N^\Q$ and $F$ the natural subgroups corresponding to the first and second component.
\begin{Prop}
	\label{prop:semidirect}
Let $\Gamma$ be a finitely generated virtually nilpotent group with associated radicable group $N^\Q$. There exists an injective morphism $i: \Gamma \to N^\Q \rtimes_\rho F$ such that $i(\Gamma) \cap N^\Q$ is a full subgroup of $N^\Q$. Moreover the maximal finite normal subgroup of $i(\Gamma)$ is equal to $i(\Gamma) \cap \ker(\rho)$.
\end{Prop}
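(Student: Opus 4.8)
The plan is to follow the relative Mal'cev completion technique of \cite{deki96-1}: first rationalize the nilpotent part of $\Gamma$ to land in an extension of a finite group by $N^\Q$, and then split that extension to obtain the semidirect product. Fix a normal torsion-free nilpotent subgroup $N \triangleleft \Gamma$ of finite index and set $F_0 = \faktor{\Gamma}{N}$, so that $N^\Q$ is the associated radicable group. Since $N$ is normal, conjugation gives a morphism $\alpha: \Gamma \to \Aut(N)$, and as every automorphism of $N$ extends uniquely to $N^\Q$ we regard $\alpha$ as a morphism $\alpha: \Gamma \to \Aut(N^\Q)$ that restricts on $N$ to the inner automorphisms of $N^\Q$. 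Forming $N^\Q \rtimes_\alpha \Gamma$ and quotienting by the normal subgroup $D = \{(n^{-1},n) : n \in N\}$ produces a group $\Gamma^*$ into which both $N^\Q$ and $\Gamma$ inject; one checks that $N^\Q$ is normal in $\Gamma^*$, that $\faktor{\Gamma^*}{N^\Q} \cong F_0$, and that the composite $\Gamma \hookrightarrow \Gamma^* \to F_0$ is the projection $\Gamma \to \faktor{\Gamma}{N}$, so that $\Gamma \cap N^\Q = N$.

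It then remains to exhibit a finite complement to $N^\Q$ in $\Gamma^*$, i.e.\ a finite subgroup $F < \Gamma^*$ mapping isomorphically onto $F_0$; taking $\rho: F \to \Aut(N^\Q)$ to be the conjugation action identifies $\Gamma^* \cong N^\Q \rtimes_\rho F$ and yields the embedding $i: \Gamma \to N^\Q \rtimes_\rho F$ with $i(\Gamma) \cap N^\Q = N$ a full subgroup. I would build $F$ by induction on the nilpotency class of $N^\Q$. The center $Z(N^\Q)$ is a characteristic subgroup isomorphic to $\Q^k$, hence normal in $\Gamma^*$, and $\faktor{\Gamma^*}{Z(N^\Q)}$ is an extension of $F_0$ by the radicable group $\faktor{N^\Q}{Z(N^\Q)}$ of smaller class, which by induction has a finite complement $\overline{F}$. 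Pulling $\overline{F}$ back gives an extension $1 \to Z(N^\Q) \to E \to \overline{F} \to 1$ of a finite group by a $\Q$-vector space, and this splits because $H^2(\overline{F}, \Q^k) = 0$ for the finite group $\overline{F}$. The resulting splitting provides the desired complement $F \cong F_0$.

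For the final statement, let $H$ be the maximal finite normal subgroup of $i(\Gamma)$. Since $N^\Q$ is torsion-free, $H \cap N^\Q = \{e\}$, so the projection $p : N^\Q \rtimes_\rho F \to F$ restricts to an injection on $H$; write each $h \in H$ uniquely as $h = (m_f, f)$. Because $N = i(\Gamma) \cap N^\Q$ is full and $H$ is normal in $i(\Gamma)$, conjugating $(m_f,f)$ by a translation $(\ell,e)$ with $\ell \in N$ yields an element of $H$ with the same $F$-component $f$, and injectivity of $p|_H$ forces it to equal $(m_f,f)$, giving $\ell\, m_f = m_f\, \rho(f)(\ell)$ for every $\ell \in N$. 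As automorphisms of $N^\Q$ are determined by their values on a full subgroup, this means $\rho(f) = \iota_{m_f^{-1}}$ is inner, where $\iota_x$ denotes conjugation by $x$. But $\rho(f)$ has finite order and $\mathrm{Inn}(N^\Q) \cong \faktor{N^\Q}{Z(N^\Q)}$ is torsion-free, so $\rho(f) = \mathrm{id}$, i.e.\ $f \in \ker(\rho)$ and $m_f \in Z(N^\Q)$; finiteness of $(m_f,f)$ together with torsion-freeness of $N^\Q$ then forces $m_f = e$. Hence $H < i(\Gamma) \cap \ker(\rho)$, and the reverse inclusion is immediate once one checks that $\{(e,f) : f \in \ker(\rho)\}$ is a finite normal subgroup of all of $N^\Q \rtimes_\rho F$, which follows from the fact that $\ker(\rho)$ centralizes $N^\Q$.

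I expect the main obstacle to be the splitting step, namely producing the finite complement $F$: this is where the radicability of $N^\Q$ is essential, and the induction must be organized so that at each stage the relevant obstruction lives in a group $H^2$ of a finite group with uniquely divisible coefficients and therefore vanishes. The verification of the ``moreover'' statement is conceptually more delicate but technically lighter; its crux is the observation that a finite-order inner automorphism of $N^\Q$ must be trivial, which is precisely what prevents the torsion of $i(\Gamma)$ from being absorbed into the translation part.
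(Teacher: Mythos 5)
Your proof is correct. For the embedding itself it takes the same route as the paper: the paper simply cites \cite{deki96-1} (page 35 for the construction of $\Gamma^\Q$, Lemma 3.1.2 for the splitting), and what you write out explicitly --- the quotient of $N^\Q \rtimes_\alpha \Gamma$ by $D = \{(n^{-1},n) : n \in N\}$, followed by the inductive splitting through the center using $H^2(\overline{F},\Q^k) = 0$ for a finite group acting on a uniquely divisible module --- is precisely the content of those citations, so this part is the same argument with the details supplied. Where you genuinely diverge is the ``moreover'' statement. The paper argues as follows: since $H$ is finite, there is $k > 0$ such that $i(y)^k$ centralizes $H$ for every $y \in N$; the elements $i(y)^k$ generate a full subgroup, and Lemma \ref{lem:central} (whose proof rests on Lemma \ref{lem:inducedmap}) then forces every $h \in H$ to have the form $(x,f)$ with $x \in Z(N^\Q)$ and $f \in \ker(\rho)$. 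You instead exploit normality of $H$ directly: conjugating by $(\ell,e)$ with $\ell \in N$ and using injectivity of the projection $H \to F$, you force $\rho(f)$ to agree on the full subgroup $N$ with conjugation by $m_f^{-1}$, hence to be an inner automorphism of finite order, hence trivial because $\mathrm{Inn}(N^\Q) \cong \faktor{N^\Q}{Z(N^\Q)}$ is torsion-free. This bypasses both the power trick and Lemma \ref{lem:central} entirely (that lemma is used nowhere else in the paper, so your route would let one drop it); the torsion-freeness of $\faktor{N^\Q}{Z(N^\Q)}$ --- i.e.\ the isolatedness of the center, a standard fact you invoke implicitly and should perhaps state --- plays for you exactly the role that Lemma \ref{lem:inducedmap} plays for the paper, and indeed your key fact also follows from that lemma, since an inner automorphism induces the identity on the abelianization and is therefore unipotent. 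Both endgames coincide: once $f \in \ker(\rho)$, finiteness of $h$ together with torsion-freeness of $N^\Q$ kills $m_f$, and your reverse inclusion (the subgroup $\{(e,f) : f \in \ker(\rho)\}$ is finite and normal in all of $N^\Q \rtimes_\rho F$, so its trace on $i(\Gamma)$ lies in the maximal finite normal subgroup) is the same observation the paper makes.
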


\noindent The condition that $i(\Gamma) \cap N^\Q$ is a full subgroup ensures that $N^\Q$ is the radicable nilpotent group associated to $\Gamma$.  In order to prove the last part of the proposition, we will need the centralizer of a full subgroup of $N^\Q$ in the group $N^\Q \rtimes_\rho F$, which is given by this lemma. 

\begin{Lem}
	\label{lem:central}
Consider the semi-direct product $G = N^\Q \rtimes_\rho F$ for some morphism $\rho: F \to \Aut(N^\Q)$ with $F$ a finite group. The centralizer of a full subgroup $N$ of $N^\Q$ in $G$ is equal to $$C_G(N) = \left\{(x,f) \suchthat x \in Z(N^\Q), f \in \ker(\rho) \right\}.$$
\end{Lem}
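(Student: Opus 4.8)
The plan is to compute the centralizer $C_G(N)$ directly by unwinding the multiplication in the semi-direct product $G = N^\Q \rtimes_\rho F$. Recall that an element is written as a pair $(x,f)$ with $x \in N^\Q$ and $f \in F$, and the product is $(x,f)(y,g) = (x \cdot \rho(f)(y), fg)$. First I would fix $(x,f) \in C_G(N)$ and translate the condition that $(x,f)$ commutes with every element $(n,e)$ for $n \in N$ into two separate equations, one in the $N^\Q$-component and one in the $F$-component. The $F$-component immediately forces $f e = e f$, which is automatic, so the real content sits in the $N^\Q$-component.

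The key computation is to expand $(x,f)(n,e) = (x \cdot \rho(f)(n), f)$ and $(n,e)(x,f) = (n \cdot x, f)$ and equate them, giving the condition $x \cdot \rho(f)(n) = n \cdot x$ for all $n \in N$. I expect this to split into two requirements. First, taking $n = e$ contributes nothing, but the dependence on $f$ is isolated by rewriting the equation as $\rho(f)(n) = x^{-1} n x$ for all $n \in N$, so that $\rho(f)$ agrees with conjugation by $x$ on the full subgroup $N$. Since $N$ is full in $N^\Q$, every element of $N^\Q$ has a power lying in $N$, and because $\rho(f)$ is an automorphism of $N^\Q$ (respecting roots) while conjugation by $x$ is likewise an automorphism, agreement on $N$ forces $\rho(f)$ to equal conjugation by $x$ on all of $N^\Q$.

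From here I would separate the two conclusions. If moreover the pair is to centralize $N$, one natural reduction is to first handle the inner part: conjugation by $x$ being an automorphism of $N^\Q$ that also equals the outer automorphism $\rho(f)$ suggests looking at $\rho(f)$ modulo inner automorphisms. Since $N^\Q$ is radicable nilpotent, any $x$ for which conjugation by $x$ fixes $N$ pointwise must lie in $Z(N^\Q)$; combined with the previous step this simultaneously forces $\rho(f)$ to be the identity, i.e. $f \in \ker(\rho)$, and $x \in Z(N^\Q)$. Conversely, one checks immediately that any pair $(x,f)$ with $x \in Z(N^\Q)$ and $f \in \ker(\rho)$ centralizes all of $N^\Q$ and hence $N$, since $\rho(f)$ then acts trivially and central $x$ commutes with everything. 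This establishes both inclusions and hence the claimed equality.

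The main obstacle I anticipate is the step passing from agreement of $\rho(f)$ and conjugation by $x$ on the full subgroup $N$ to agreement on all of $N^\Q$, and then cleanly disentangling that $x$ must be central while $\rho(f)$ is trivial, rather than merely that $\rho(f)$ is inner. The fullness of $N$ is the crucial hypothesis here: it guarantees Zariski-density (equivalently, that powers of arbitrary elements lie in $N$), so two automorphisms of $N^\Q$ agreeing on $N$ must coincide everywhere, using that the $m$-th root operation $x \mapsto x^{1/m}$ is unique in $N^\Q$. Once that is in hand, the identification $Z(N^\Q)$ as the set of $x$ with trivial inner action is routine, and showing $\rho(f) = \mathrm{id}$ follows because the inner automorphism induced by a central element is trivial.
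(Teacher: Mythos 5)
Your setup is fine and matches the paper up to a point: the easy inclusion, the translation of the centralizing condition into $\rho(f)(n) = x^{-1} n x$ for all $n \in N$, and the extension of this identity from the full subgroup $N$ to all of $N^\Q$ via uniqueness of $m$-th roots are all correct and are exactly the paper's first steps. But there is a genuine gap at the disentangling step. What the computation gives you is only that $\rho(f)$ \emph{is inner}, namely equal to conjugation by $x^{-1}$ on $N^\Q$; it does not give you that conjugation by $x$ fixes $N$ pointwise. Your two conclusions then feed on each other circularly: you deduce $x \in Z(N^\Q)$ from the claim that conjugation by $x$ fixes $N$ pointwise, which is only known once $\rho(f) = \mathrm{id}$, and you deduce $\rho(f) = \mathrm{id}$ ``because the inner automorphism induced by a central element is trivial,'' which presupposes $x$ central. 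Neither half is established independently, so no non-circular argument rules out the remaining possibility that $\rho(f)$ is a \emph{nontrivial} inner automorphism, with $x \notin Z(N^\Q)$ and $f \notin \ker(\rho)$.

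Note that any correct argument must use the finiteness of $F$ (equivalently, that $\rho(f)$ has finite order), which your proof never invokes at the crucial point: if $\rho(f)$ could be conjugation by a non-central $x^{-1}$, then $(x,f)$ would centralize $N$ while violating the lemma's conclusion, so finite order is exactly what excludes this. The paper closes the gap by passing to the abelianization $\faktor{N^\Q}{[N^\Q,N^\Q]}$, where conjugation becomes trivial, so the centralizing identity forces the induced map $\overline{\rho(f)}$ to be the identity there. Lemma \ref{lem:inducedmap} then shows $\rho(f)$ is unipotent, and since $\rho(f)$ has finite order (as $F$ is finite) it is semisimple, hence trivial; only after concluding $f \in \ker(\rho)$ does the original equation reduce to $xyx^{-1} = y$ for all $y \in N^\Q$, yielding $x \in Z(N^\Q)$. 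To repair your proof, replace the circular paragraph with this abelianization-plus-finite-order argument (or an equivalent one showing a finite-order inner automorphism of a torsion-free radicable nilpotent group is trivial).
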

\begin{proof}
It is clear that elements $(x,f)$ with $x \in Z(N^\Q)$ and $f \in \ker(\rho)$ centralize $N$, so it suffices to show the reverse inclusion. For this, note that $(x,f) \in G$ centralizes $N$ if and only if \begin{align}
\label{eq:conjugate}
y = (x,f) y (x,f)^{-1} = x \rho(f)(y) x^{-1}
\end{align} for all $y \in N$. Since every element in $N^\Q$ is of the form $y^{\frac{1}{m}}$ for some $y \in N$ and an integer $m > 0$, we conclude that $(x,f)$ centralizes $N^\Q$ as well. By considering Equation (\ref{eq:conjugate}) in the quotient group $\faktor{N^\Q}{[N^\Q,N^\Q]}$ with induced automorphism $\overline{\rho(f)}$, we find that $\overline{\rho(f)}(y [N^\Q,N^\Q]) = y [N^\Q,N^\Q]$ for all $y \in N^\Q$. By Lemma \ref{lem:inducedmap}, we conclude that $\rho(f) = \I_{N^\Q}$ or thus $f \in \ker(\rho)$. In particular, Equation \ref{eq:conjugate} implies that $x \in Z(N^\Q)$, giving the second inclusion.
\end{proof}
\begin{proof}[Proof of Proposition \ref{prop:semidirect}]
Let $N$ be a torsion-free nilpotent normal subgroup of $\Gamma$. Consider the short exact sequence $$1 \to N \to \Gamma \to F \to 1,$$ where $F = \faktor{\Gamma}{N}$ is a finite group. Every automorphism of $N$ uniquely extends to an automorphism of $N^\Q$, hence exactly as in \cite[Page 35]{deki96-1} there exists a group $\Gamma^\Q$ and an injective morphism $j: \Gamma \to \Gamma^\Q$ which fit in the following commuting diagram:
\[
\xymatrix{ 1 \ar[r] & N \ar[r]\ar[d] & \Gamma \ar[r]\ar[d]_{j} &
	F \ar[r] \ar@{=}[d] &1\\
	1 \ar[r] & N^\Q \ar[r] & \Gamma^\Q \ar[r] &F \ar[r] & 1. } \]
 By the same argument as \cite[Lemma 3.1.2.]{deki96-1}, the bottom short exact sequence splits because the group $N^\Q$ is radicable, leading to an isomorphism $\Gamma^\Q \approx N^\Q \rtimes_\rho F$. Hence we find an embedding $i: \Gamma \to N^\Q \rtimes_\rho F$ satisfying $i(\Gamma) \cap N^\Q = i(N)$ which is a full subgroup.

For the last part of the proposition, note that $i(\Gamma) \cap \ker(\rho)$ is a finite normal subgroup of $i(\Gamma)$. It suffices to show that every finite normal subgroup $H$ of $i(\Gamma)$ lies in $\ker(\rho)$. Since $H$ is finite, there exists some integer $k > 0$ such that for every element $y$ which normalizes $H$, the element $y^k$ centralizes $H$. In particular, for every element $y \in N$, we have that $i(y)^k$ centralizes $H$. Because the elements $i(y)^k$ generate a full subgroup of $N^\Q$, Lemma \ref{lem:central} implies that every element of $H$ is of the form $(x,f)$ with $f \in \ker(\rho)$ and $x \in Z(N^\Q)$. Since $H$ is finite and $Z(N^\Q)$ is torsion-free, this yields that every element of $H$ lies in $\ker(\rho)$, as we needed to show.
\end{proof}

Although the group $N^\Q$ is uniquely determined by the group $\Gamma$, both the group $F$ and the morphism $\rho: F \to \Aut(N^\Q)$ depend on the choice of the subgroup $N$. We give a concrete example illustrating this fact.
\begin{Ex}
	\label{ex:easiest}
Consider the group $\Gamma = \Z \rtimes \Z_2$, where $\Z_2 = \{\pm 1\}$ acts on $\Z$ by multiplication. We consider two different abelian normal subgroups, namely $N_1 = \Z \triangleleft \Gamma$ and $N_2 = 2 \Z = \Gamma^2 \triangleleft \Gamma$. In the first case, we have $F_1 = \faktor{\Gamma}{N_1} \approx \Z_2$ and the map 
\begin{align*}
i_1: \Gamma &\to \Q \rtimes_{\rho_1} \Z_2 \\
(z,t) & \mapsto (z,t)
\end{align*} with $\rho_1: \Z_2 \to \GL(\Q)$ the natural inclusion. In the second case, we have the finite group $F_2 =  \faktor{\Gamma}{N_2} \approx \Z_2 \oplus \Z_2$ with representation $\rho_2: \Z_2 \oplus \Z_2 \to \GL(\Q)$ given by $\rho_2(t_1,t_2) = t_1$. In this case the map $i_2: \Gamma \to \Q \rtimes_{\rho_2} F_2$ is given by 
\begin{align*}
i_2(z,t) = 
\begin{cases}
(z,t,1) & \text{if } z \in 2\Z \\
(z,t,-1) & \text{if } z \notin 2 \Z. \\
\end{cases}
\end{align*} 
Since $i_2(\Gamma) \cap \ker(\rho_2) = \{e\}$, also the embedding $i_2$ shows that $\Gamma$ has no finite normal subgroup.
	\end{Ex}

\subsection{Constructing the monomorphism}
The next proposition generalizes \cite[Theorem 4.2.]{dd14-1} which only considered torsion-free groups. The construction is almost identical, but it leads to monomorphisms preserving the maximal finite normal subgroup, which is a subgroup of $F$.
\begin{Prop}
	\label{prop:autoonsemi}
Let $\Gamma$ be a finitely generated virtually nilpotent group realized as a subgroup of $N^\Q \rtimes_\rho F$ with $N^\Q \cap \Gamma$ a full subgroup of $N^\Q$. If $N^\Q$ has a positive grading, there exists an automorphism $\Phi: N^\Q \rtimes F \to N^\Q \rtimes F$ leaving $\Gamma$ invariant and such that $$\bigcap_{n > 0} \Phi^n(\Gamma) = \Gamma \cap F.$$ 
\end{Prop}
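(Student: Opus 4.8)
The plan is to use the positive grading on $N^\Q$ to manufacture an expanding automorphism of $N^\Q$ and then compatibly promote it to an automorphism of the semi-direct product $N^\Q \rtimes F$ that fixes the finite factor $F$. Concretely, if $N^\Q = \bigoplus_{i=1}^k \lien_i$ is the positive grading on the corresponding Lie algebra, I would first consider the one-parameter family of grading automorphisms $\psi_t$ acting as multiplication by $t^i$ on the component $\lien_i$, and then pick an integer $d > 1$ and set $\psi = \psi_d$, so that $\psi$ is expanding (all eigenvalues are powers $d^i > 1$). The subtlety is that $\psi$ must be chosen so that the resulting group automorphism of $N^\Q$ is compatible with the action $\rho$, in the sense that it conjugates $\rho(f)$ to itself for each $f \in F$; this is exactly what is needed for $(x,f) \mapsto (\psi(x), f)$ to define a group homomorphism $\Phi$ of $N^\Q \rtimes_\rho F$.

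The key observation making this work is that the positive grading can be chosen $F$-invariant: since $F$ is finite and $\rho(F) \subset \Aut(N^\Q)$, one can average or otherwise arrange the grading so that each graded piece $\lien_i$ is preserved by $\rho(F)$, which forces $\psi$ to commute with every $\rho(f)$. This is the step I expect to mirror the construction in \cite{dd14-1}, and it is where the main technical content lies — establishing that a positive grading exists that is simultaneously compatible with the finite group action, rather than just any positive grading. Once $\psi$ commutes with $\rho(F)$, the map $\Phi(x,f) = (\psi(x), f)$ is a well-defined automorphism of $N^\Q \rtimes F$ fixing $F$ pointwise and restricting to $\psi$ on $N^\Q$.

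Next I would verify that $\Phi$ leaves $\Gamma$ invariant, or rather that after replacing $\psi$ by a suitable power (equivalently choosing $d$ divisible enough) one can guarantee $\Phi(\Gamma) \subset \Gamma$. Since $\Gamma \cap N^\Q$ is a full subgroup of $N^\Q$ and $\psi$ maps the integral-type lattice into itself for appropriate $d$, the restriction of $\Phi$ to $\Gamma \cap N^\Q$ is a monomorphism into $\Gamma \cap N^\Q$; combined with $\Phi$ fixing $\Gamma \cap F$ and the coset structure of $\Gamma$ over its intersection with $N^\Q$, this yields $\Phi(\Gamma) \subset \Gamma$. The main obstacle here is bookkeeping: one must check that $\Phi$ respects the cosets of $\Gamma \cap N^\Q$ in $\Gamma$, using that $\Phi$ acts trivially on the $F$-component.

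Finally, to compute $\bigcap_{n>0}\Phi^n(\Gamma)$ and show it equals $\Gamma \cap F$, I would argue two inclusions. The inclusion $\Gamma \cap F \subset \bigcap_n \Phi^n(\Gamma)$ is immediate since $\Phi$ fixes $F$ pointwise, so $\Phi^n(\Gamma \cap F) = \Gamma \cap F$ for all $n$. For the reverse inclusion, I would use that $\psi$ is expanding on $N^\Q$: by Lemma \ref{lem:expanding}, $\bigcap_{n>0}\psi^n(\Gamma \cap N^\Q)$ is trivial, and an element of $\bigcap_n \Phi^n(\Gamma)$ projecting nontrivially to $N^\Q$ would, after intersecting with the full subgroup and passing to the $N^\Q$-component, contradict this triviality. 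More carefully, writing an arbitrary element of $\Gamma$ as $(x,f)$, membership in all $\Phi^n(\Gamma)$ forces the $N^\Q$-part to lie in arbitrarily deep images of the expanding map, hence to be trivial, leaving only elements of $\Gamma \cap F$. The expanding estimate of Lemma \ref{lem:expanding} is the crucial input that collapses the intersection down precisely to the finite subgroup $\Gamma \cap F$.
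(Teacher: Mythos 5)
Your strategy is essentially the paper's: an $F$-invariant positive grading (the paper quotes \cite[Theorem 2.2.]{dere14-1} for its existence, and \cite[Corollary 3.3.]{dd14-1} for expanding automorphisms commuting with $\rho(F)$ that preserve a full subgroup), the automorphism $\Phi(x,f) = (\psi(x),f)$ fixing $F$ pointwise, and Lemma \ref{lem:expanding} to collapse the intersection to $\Gamma \cap F$. However, your invariance step $\Phi(\Gamma) \subset \Gamma$ contains a genuine gap. For $(x,f) \in \Gamma$ with $f \neq e$, the component $x$ need \emph{not} lie in $N := \Gamma \cap N^\Q$; a short computation in the semi-direct product gives $(\psi(x),f)(x,f)^{-1} = (\psi(x)x^{-1},e)$, so $\Phi(\Gamma) \subset \Gamma$ is equivalent to $\psi(x)x^{-1} \in N$ for \emph{every} first component $x$ of an element of $\Gamma$, and these components generate a full subgroup $N_1$ that in general strictly contains $N$. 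This is a congruence condition, not a divisibility condition: your parenthetical ``equivalently choosing $d$ divisible enough'' is false. Concretely, take the conjugate $\Gamma^\prime = \{(z,1) \suchthat z \in \Z\} \cup \{(z+\tfrac{1}{2},-1) \suchthat z \in \Z\}$ of the group of Example \ref{ex:easiest} inside $\Q \rtimes \Z_2$: the map $\psi(x) = 2x$ preserves $\Gamma^\prime \cap \Q = \Z$ and $\Phi$ fixes the $\Z_2$-component, yet $\Phi(\tfrac{1}{2},-1) = (1,-1) \notin \Gamma^\prime$. Here one needs $d$ \emph{odd}, i.e.~$\psi$ inducing the identity on $\faktor{\tfrac{1}{2}\Z}{\Z}$.

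This is exactly what the paper's proof supplies and your sketch omits: introduce $N_1$ (the full subgroup generated by all first components of $\Gamma$), pick a normal subgroup $N_2 \triangleleft N_1$ with $N_2 < N$, and choose the prime $p$ and exponent $k$ so that $\phi_p^k$ preserves $N_1$, $N_2$ and $N$ \emph{and} induces the identity on the finite group $\faktor{N_1}{N_2}$; then $\psi(x)x^{-1} \in N_2 \subset N$ and invariance follows. Your ``suitable power'' idea can be salvaged along these lines --- once a fixed $\psi$ preserves both $N_1$ and $N_2$, some power induces the identity on the finite quotient --- but you never introduce $N_1$, and preserving only $\Gamma \cap N^\Q$ is insufficient, as the example shows. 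A minor related repair: in your final step the $N^\Q$-parts of elements of $\Phi^n(\Gamma)$ lie in $\psi^n(N_1)$, not in $\psi^n(\Gamma \cap N^\Q)$, so Lemma \ref{lem:expanding} should be applied to $N_1$; this is cosmetic once $N_1$ is in place. The rest of your argument (the $F$-invariant grading, that $\psi$ scalar on each graded piece commutes with $\rho(F)$, and both inclusions for the intersection) is sound and matches the paper.
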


\begin{proof}
Since $\rho(F)$ is a finite subgroup of $\Aut(N^\Q)$, there exists a positive grading which is preserved by $\rho(F)$, see \cite[Theorem 2.2.]{dere14-1}. In \cite[Corollary 3.3.]{dd14-1} it is shown that for every prime $p>0$, there exist expanding automorphisms $\phi_p \in \Aut(N^\Q)$ with determinant $p^m$ for some fixed $m >0$, which leave a full subgroup $N_0$ of $N^\Q$ invariant and commute with every element of $\rho(F)$. Note that each of these automorphisms also induce automorphisms $\Phi_p: N^\Q \rtimes_\rho F \to N^\Q \rtimes_\rho F$ by $$\Phi_p(x,f) = (\phi_p(x),f).$$ We claim that $\Phi = \Phi^k_p$ for some $p$ and $k$ satisfies the conditions of the proposition. 

Consider the full subgroup $N = \Gamma \cap N^\Q$. Let $N_1$ be the full subgroup generated by all $x \in N^\Q$ for which there exists $f \in F$ with $(x,f) \in \Gamma$. Note that $N < N_1$ is a subgroup of finite index. Take any normal subgroup $N_2 \triangleleft N_1$ such that $N_2 < N$. Exactly as in the proof of \cite[Theorem 4.2.]{dd14-1} there exists some prime $p$ and $k > 0$ such that $\phi_p^k(N_1) < N_1, \phi_p^k(N_2) < N_2, \phi_p^k(N) < N$ and such that $\phi_p^k$ induces the identity map on the finite group $\faktor{N_1}{N_2}$. We claim that the corresponding map $\Phi = \Phi_p^k$ satisfies the conditions of the proposition.

First we show that $\Gamma$ is $\Phi$-invariant. Take any $(x,f) \in \Gamma$ with $x \in N_1$ and $f \in F$. Then $$\Phi_p^k(x,f) = (\phi_p^k(x),f) = (x_2 x,f) = (x_2,e) (x,f)$$ where $x_2 \in N_2 <N$ and hence $\Phi_p^k(x,f) \in \Gamma$ for all $\gamma = (x,f) \in \Gamma$. Moreover, if we look at $\Phi^n(\Gamma) \subset \Phi^n(N_1) \rtimes \Phi^n(F) = \Phi^n(N_1) \rtimes F$, hence \begin{align*}\bigcap_{n > 0} \Phi^n(\Gamma) <F
\end{align*} by Lemma \ref{lem:expanding}. Since $\Gamma \cap F$ is automatically contained in $\Phi^n(\Gamma)$ for all $n > 0$, this shows that also the second condition is satisfied.
\end{proof} 

Note that the monomorphism constructed in Proposition \ref{prop:autoonsemi} is always strongly scale-invariant, but the intersection of the images can be bigger than the maximal finite normal subgroup.

\begin{Ex}
	\label{ex:finitesubgroup}
Let $\Gamma = \Z \rtimes \Z_2$ as in Example \ref{ex:easiest}. The monomorphisms constructed in Proposition \ref{prop:autoonsemi} are of the form 
\begin{align*}
\varphi_m: \Gamma &\to \Gamma\\ (x,t) &\mapsto (mx,t)
\end{align*}
with $m \in \N_0$. Note that $\varphi_m(\Z_2) = \Z_2$, although it is not a finite normal subgroup of $\Gamma$.
\end{Ex}

\subsection{Conjugate subgroup}
In order to get a group morphism such that $\displaystyle \bigcap_{n > 0} \varphi^n(\Gamma)$ is the maximal finite normal subgroup, we have to alter the embedding $\Gamma < N^\Q \rtimes_\rho F$. The following lemma shows that an embedding exists such that the finite normal subgroup is equal to the intersection with $F$.

\begin{Lem}
\label{lem:conjugate}
Let $\Gamma$ be a finitely generated virtually nilpotent group realized as a subgroup of $N^\Q \rtimes_\rho F$ with $N^\Q \cap \Gamma$ a full subgroup. There exists $x \in N^\Q$ such that $x \Gamma x^{-1} \cap F <\ker{\rho}$. 
\end{Lem}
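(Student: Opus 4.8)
The plan is to compute the effect of conjugation by an element of $N^\Q$ on the $F$-part of $\Gamma$, and then to choose the conjugating element by a covering argument carried out on the abelianization of $N^\Q$. Writing $N = \Gamma \cap N^\Q$ and identifying $x \in N^\Q$ with $(x,e) \in N^\Q \rtimes_\rho F$, a direct computation in the semi-direct product gives, for $(y,g) \in \Gamma$,
$$ (x,e)(y,g)(x,e)^{-1} = \left(x\,y\,\rho(g)(x)^{-1},\, g\right). $$
This element lies in $F$ precisely when $y = x^{-1}\rho(g)(x)$. Hence $x\Gamma x^{-1} \cap F$ consists of those $(e,g)$ for which there is some $(y,g) \in \Gamma$ with $x^{-1}\rho(g)(x) = y$. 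For each $g$ in the finite image $\overline{\Gamma}$ of $\Gamma$ in $F$, the set $S_g = \{y \in N^\Q \mid (y,g) \in \Gamma\}$ is a single coset $N y_g$ of $N$, since the difference of two such elements lies in $\Gamma \cap N^\Q = N$. Thus it suffices to find $x \in N^\Q$ such that $x^{-1}\rho(g)(x) \notin N y_g$ for each of the finitely many $g \in \overline{\Gamma}$ with $g \notin \ker(\rho)$.

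To make this avoidance problem tractable I would pass to the abelianization $V = \faktor{N^\Q}{[N^\Q,N^\Q]} \cong \Q^m$, on which $\rho(g)$ induces a linear map $\overline{\rho(g)}$ and $N$ maps to a finitely generated subgroup $\overline{N}$. Writing the image of $x^{-1}\rho(g)(x)$ in $V$ additively, it equals $A_g \overline{x}$ with $A_g = \overline{\rho(g)} - I$, so the condition $x^{-1}\rho(g)(x) \in N y_g$ forces $A_g \overline{x} \in \overline{N} + \overline{y_g}$. It therefore suffices to choose $\overline{x} \in V$ avoiding, for each relevant $g$, the set $B_g = \{ v \in V \mid A_g v \in \overline{N} + \overline{y_g}\}$, and then to lift $\overline{x}$ to some $x \in N^\Q$; this lifting direction is automatic since avoidance downstairs implies avoidance upstairs. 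The key point enabling the whole scheme is that $A_g \neq 0$ whenever $g \notin \ker(\rho)$: indeed $\rho(g)$ has finite order as $F$ is finite, so if $\overline{\rho(g)} = I$ then Lemma \ref{lem:inducedmap} would force $\rho(g)$ to be trivial, a contradiction.

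Finally, each $B_g$ is a coset of the subgroup $K_g = \{v \in V \mid A_g v \in \overline{N}\}$, and the decisive observation is that $K_g$ has infinite index in $V$ exactly because $A_g \neq 0$: the map $A_g$ identifies $\faktor{V}{K_g}$ with $\faktor{A_g(V)}{A_g(V) \cap \overline{N}}$, and since $A_g(V)$ is a nonzero $\Q$-subspace while $A_g(V)\cap\overline{N}$ is finitely generated, this quotient is infinite. Consequently $\bigcup_g B_g$ is a finite union of cosets of subgroups of infinite index, and by the classical theorem of B.~H.~Neumann such a union cannot exhaust $V$; any $\overline{x}$ outside it yields the desired $x$. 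I expect the main obstacle to be precisely the bookkeeping that converts the avoidance condition into cosets of subgroups of infinite index — verifying that $S_g$ is a single coset of $N$, that passing to $V$ only enlarges the bad sets so that avoidance downstairs suffices, and above all the infinite-index claim for $K_g$, which is where the hypothesis $g \notin \ker(\rho)$ is genuinely used through Lemma \ref{lem:inducedmap}.
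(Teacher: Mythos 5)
Your proof is correct, and while it shares the paper's skeleton, its endgame is genuinely different. Both arguments start from the same conjugation formula $(x,e)(y,g)(x,e)^{-1} = (xy\,\rho(g)(x)^{-1},g)$, pass to the abelianization $V = \faktor{N^\Q}{[N^\Q,N^\Q]}$, and use Lemma \ref{lem:inducedmap} in exactly the same way to guarantee $A_g = \overline{\rho(g)} - I \neq 0$ for $g \notin \ker(\rho)$. The divergence is in how the avoidance is concluded. The paper first enlarges $N$ to the full subgroup generated by \emph{all} first components $x_\gamma$ of elements of $\Gamma$, so that the elements to be avoided sit inside the single subgroup $\overline{N} < V$ rather than in your various cosets $\overline{N} + \overline{y_g}$; it then chooses $y$ outside the finitely many proper subspaces $\ker A_g$ (the eigenspaces for eigenvalue $1$) and exploits radicability, setting $x = y^{\frac{1}{m}}$ with $m$ large so that $A_g\overline{x} = \frac{1}{m}A_g\overline{y} \notin \overline{N}$, using that the finitely generated group $\overline{N}$ has bounded denominators. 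You instead keep the honest coset bookkeeping — identifying $S_g$ as a coset $Ny_g$ of $N = \Gamma \cap N^\Q$, which is correct — and dispose of the bad sets $B_g$ by B.~H.~Neumann's covering theorem, with the infinite index of $K_g = A_g^{-1}(\overline{N})$ coming from your isomorphism $\faktor{V}{K_g} \cong \faktor{A_g(V)}{A_g(V)\cap \overline{N}}$; note that the infinitude of this last quotient rests on the same bounded-denominator fact the paper uses directly. Your route is non-constructive and imports a classical external theorem where the paper's scaling trick produces $x$ explicitly as an $m$-th root, but in exchange you avoid enlarging $N$ and make transparent that the only structure needed is that the bad sets are finitely many cosets of infinite-index subgroups. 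Two trivial points to tidy: $B_g$ is a coset of $K_g$ only when nonempty (empty ones can simply be dropped before invoking Neumann), and the finite generation of $\overline{N}$, which your infinite-index computation needs, should be justified by the maximality condition in virtually polycyclic groups.
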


In particular, the finite normal subgroup of the group $x \Gamma x^{-1}$ is exactly its intersection with $F$.

\begin{proof}
Every $\gamma \in \Gamma$ can be written uniquely as $(x_\gamma,f_\gamma)$ with $x_\gamma \in N^\Q$ and $f_\gamma \in F$. Take $N$ the full subgroup of $N^\Q$ generated by the elements $x_\gamma$ for all $\gamma \in \Gamma$. For every $f_\gamma \notin \ker(\rho)$, we have that also the induced map by $\rho(f_\gamma)$ on $\faktor{N^\Q}{[N^\Q,N^\Q]}$ is non-trivial by Lemma \ref{lem:inducedmap}. This means that the eigenspace for eigenvalue $1$ has dimension strictly smaller than the dimension of $\faktor{N^\Q}{[N^\Q,N^\Q]}$. Since we only have a finite number of such elements $f_\gamma$, each having an eigenspace for eigenvalue $1$ with dimension strictly smaller than the rational vector space $\faktor{N^\Q}{[N^\Q,N^\Q]}$, there exists $y \in N^\Q$ such that $y$ not an eigenvector for eigenvalue $1$ for $f_\gamma \notin \ker(\rho)$, so $y^{-1} \rho(f_\gamma)(y) \notin [N^\Q,N^\Q]$. The subgroup $\faktor{N}{[N^\Q,N^\Q]}$ is finitely generated, hence by taking $x = y^{\frac{1}{m}}$ for some integer $m > 0$, we can assume that $x$ satisfies $$x^{-1} \rho(f_\gamma)(x) [N^\Q,N^\Q] = \frac{y^{-1} \rho(f_\gamma)(y)}{m} [N^\Q,N^\Q]\notin \faktor{N}{[N^\Q,N^\Q]}.$$ We claim that $x$ satisfies the condition of the lemma.

Indeed, take any element $\gamma = (x_\gamma, f_\gamma) \in \Gamma$ with $f_\gamma \notin \ker(\rho)$. Then after conjugating, we get $$x \gamma x^{-1} = (x x_\gamma \rho(f_\gamma)(x^{-1}), f_\gamma)$$ and since $$x x_\gamma \rho(f_\gamma)(x^{-1}) [N^\Q,N^\Q] = x \rho(f_\gamma)(x^{-1}) x_\gamma [N^\Q,N^\Q] \neq [N^\Q,N^\Q]$$ by the last condition on $x$, the result follows.
\end{proof}
Combining all the previous steps, we are ready to prove Theorem \ref{thm:virtuallynilpotent}.
\begin{proof}[Proof of Theorem \ref{thm:virtuallynilpotent}]
First assume that $\Gamma$ is strongly scale-invariant and take any injectively characteristic subgroup $N$ of finite index which is nilpotent and torsion-free. Since $N$ is injectively characteristic, it is also strongly scale-invariant by Lemma \ref{lem:restrict}. In \cite[Theorem 1.11.]{corn14-1} it was shown that the corresponding rational Mal'cev completion has a positive grading.

For the other implication, consider $\Gamma$ as a subgroup of $N^\Q \rtimes_\rho F$ as in Proposition \ref{prop:semidirect}. By conjugating $\Gamma$ as in Lemma \ref{lem:conjugate}, we can assume that $\Gamma \cap F = \Gamma \cap \ker(\rho)$, which is moreover the maximal finite normal subgroup by Proposition \ref{prop:semidirect}. The automorphism constructed in Proposition \ref{prop:autoonsemi} induces a monomorphism $\varphi$ on $\Gamma$ which is strongly scale-invariant since $F$ is finite. Moreover, because of our assumption we have that $\displaystyle \bigcap_{n>0} \varphi^n(\Gamma) = \Gamma \cap F = \Gamma \cap \ker{(\rho)}$ is the maximal finite normal subgroup of $\Gamma$.
\end{proof}

We illustrate the construction in Theorem \ref{thm:virtuallynilpotent} via Example \ref{ex:finitesubgroup}.

\begin{Ex}
Consider again the group $\Gamma = \Z \rtimes \Z_2$, as a subgroup of $\Q \rtimes \Z_2$ as given by $i_1$ in Example \ref{ex:easiest}. Taking $x = \frac{1}{4}$, we get
\begin{align*} 
\Gamma^\prime = x \Gamma x^{-1} &= \left\{\left(\frac{1}{4},1\right) (z,t) \left(-\frac{1}{4},1\right) \suchthat z \in \Z, t \in \{\pm 1\}  \right\} \\ 
&= \left\{ \left(z + \frac{1}{2},-1\right)  \suchthat z \in \Z \right\} \cup  \left\{ \left(z,1\right)\suchthat z \in \Z \right\} 
\end{align*}
and thus $\Gamma^\prime \cap \Z_2$ is trivial.

For $m=3$ in Example \ref{ex:finitesubgroup} we get the automorphism $\Phi: \Q \rtimes \Z_2 \to \Q \rtimes \Z_2$ defined as $\Phi(t,z) = (3t, z)$ which induces a strongly scale-invariant monomorphism on $\Gamma^\prime$. On the group $\Gamma = \Z \rtimes \Z_2$ with generators $a$ and $b$ for $\Z$ and $\Z_2$ respectively, this induced map $\varphi: \Gamma \to \Gamma$ is given by $\varphi(a) = a^3$ and $\varphi(b) = ab$.

\end{Ex}

The virtually nilpotent groups $\Gamma$ for which every finite normal subgroup is trivial are called almost-crystallographic, see \cite{deki96-1}. They can be represented as isometries of a simply connected nilpotent Lie group $N^\R$, and we say that the group is modeled on the Lie group $N^\R$. Note that $N^\R$ is exactly the $\Q$-algebraic hull of any torsion-free nilpotent subgroup of $\Gamma$, and hence the radicable group $N^\Q$ associated to $\Gamma$ is equal to rational points of $N^\R$. The existence of a positive grading on $N^\Q$ is equivalent to the existence of a positive grading on the real Lie algebra corresponding to $N^\R$, see \cite[Theorem 1.4.]{corn14-1} or \cite[Theorem 1.2.]{dere14-1}. As an immediate consequence of Theorem \ref{thm:virtuallynilpotent}, we have the following result.

\begin{Cor}
	\label{cor:crystal}
Let $\Gamma$ be a finitely generated virtually nilpotent group. There exists a monomorphism $\varphi:\Gamma \to \Gamma$ with $\displaystyle \bigcap_{n > 0}\varphi^n(\Gamma)$ trivial if and only if $\Gamma$ is almost crystallographic and modeled on a Lie group $N^\R$ with a positive grading.
\end{Cor}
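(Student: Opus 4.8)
The plan is to derive this corollary directly from Theorem \ref{thm:virtuallynilpotent} together with the structure theory of almost-crystallographic groups recalled just before the statement. The key observation is that requiring $\bigcap_{n>0}\varphi^n(\Gamma)$ to be \emph{trivial} (rather than merely finite) forces the maximal finite normal subgroup of $\Gamma$ to be trivial, since by \cite[Proposition 2.7.]{corn15-1} this maximal finite normal subgroup $H$ satisfies $\varphi(H)=H$ and hence $H \subset \bigcap_{n>0}\varphi^n(\Gamma)$. Thus any such $\varphi$ can only exist when $H$ is trivial, which is precisely the definition of $\Gamma$ being almost-crystallographic.

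First I would prove the forward implication. Suppose a monomorphism $\varphi$ with trivial intersection exists. Then $\Gamma$ is in particular strongly scale-invariant, so by Theorem \ref{thm:virtuallynilpotent} the Lie algebra of $N^\Q$ has a positive grading. As noted above, the triviality of the intersection forces the maximal finite normal subgroup to be trivial, so $\Gamma$ is almost-crystallographic. By the discussion preceding the corollary, $\Gamma$ is modeled on a simply connected nilpotent Lie group $N^\R$ whose rational points form $N^\Q$, and by \cite[Theorem 1.4.]{corn14-1} the positive grading on $N^\Q$ transfers to a positive grading on the real Lie algebra of $N^\R$.

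For the converse, assume $\Gamma$ is almost-crystallographic and modeled on $N^\R$ with a positively graded Lie algebra. Then the maximal finite normal subgroup of $\Gamma$ is trivial, and the positive grading on the real Lie algebra restricts to one on $N^\Q$, again by \cite[Theorem 1.4.]{corn14-1}. Applying Theorem \ref{thm:virtuallynilpotent}, there exists a monomorphism $\varphi:\Gamma \to \Gamma$ with $\bigcap_{n>0}\varphi^n(\Gamma)$ equal to the maximal finite normal subgroup, which is now trivial. This is exactly the desired $\varphi$.

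I do not expect a genuine obstacle here, since every ingredient is already packaged in the stated results; the work is purely organizational. The only point requiring slight care is the equivalence of positive gradings on $N^\Q$ and on the real Lie algebra of $N^\R$, but this is handled verbatim by the cited \cite[Theorem 1.4.]{corn14-1} (or \cite[Theorem 1.2.]{dere14-1}), so the argument reduces to correctly threading the finite-normal-subgroup bookkeeping through both directions.
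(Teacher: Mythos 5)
Your proposal is correct and matches the paper's intended derivation exactly: the paper presents this corollary as an immediate consequence of Theorem \ref{thm:virtuallynilpotent}, using the same observation that $\varphi(H)=H$ forces $H \subset \bigcap_{n>0}\varphi^n(\Gamma)$, the ``moreover'' clause of the theorem for the converse, and the cited equivalence of positive gradings on $N^\Q$ and on the real Lie algebra of $N^\R$. Your implicit use of the fact that any monomorphism of a finitely generated virtually nilpotent group automatically has finite index image (via Hirsch length) is also consistent with the paper's setup.
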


Note that the monomorphisms constructed in Corollary \ref{cor:crystal} following the proof of Theorem \ref{thm:virtuallynilpotent} satisfy the stronger property that they are expanding. This means that, after chosing a finite generating set $S$ for the group $\Gamma$ with corresponding word metric $\Vert \cdot \Vert_S$, it holds that $\Vert \varphi^k(\gamma) \Vert_S \geq c \lambda^k \Vert \gamma \Vert_S$ for some $c > 0, \lambda > 1$. These expanding monomorphisms for which the image has finite index in the group exist only on virtually nilpotent groups by the work of J.~Franks and M.~Gromov, see \cite{fran70-1,grom81-1}. Since every expanding map is automatically strongly scale-invariant, Theorem \ref{thm:virtuallynilpotent} can also be considered as a characterization of the groups admitting an expanding monomorphism with finite index image. More details can be found in \cite{dd14-1}. 

\bibliography{ref}
\bibliographystyle{plain}

\end{document}